\newtheorem{definition}{Definition}
\newtheorem{theorem}{Theorem}
\newtheorem{lemma}{Lemma}
\newtheorem{proposition}{Proposition}
\newcommand{\bx}{\boldsymbol{x}}
\newcommand{\bu}{\boldsymbol{u}}
\newcommand{\bz}{\boldsymbol{z}}
\newcommand{\bp}{\boldsymbol{p}}
\newcommand{\blambda}{\boldsymbol{\lambda}}
\newcommand{\bd}{\boldsymbol{d}}
\newcommand{\bdelta}{\boldsymbol{\delta}}
\newcommand{\bG}{\boldsymbol{G}}
\newcommand{\bH}{\boldsymbol{H}}
\newcommand{\st}{\mathop{\text{\normalfont s.t.}}}
\newcommand{\diag}{\mathop{\text{\normalfont diag}}}
\begin{document}

\title{Diffusing-Horizon Model Predictive Control}

\author{Sungho Shin and Victor M. Zavala
\thanks{Sungho Shin and Victor M. Zavala are with Department of Chemical and Biological Engineering, University of Wisconsin-Madison, Madison, WI, 53706 USA e-mail: sungho.shin@wisc.edu, victor.zavala@wisc.edu.}}

\maketitle

\begin{abstract}
  We analyze a time-coarsening strategy for model predictive control (MPC) that we call diffusing-horizon MPC. This strategy seeks to overcome the computational challenges associated with optimal control problems that span multiple timescales. The coarsening approach uses a time discretization grid that becomes exponentially more sparse as one moves forward in time. This design is motivated by a recently established property of optimal control problems that is known as exponential decay of sensitivity. This property states that the impact of a parametric perturbation at a future time decays exponentially as one moves backward in time. We establish conditions under which this property holds for a constrained MPC formulation with linear dynamics and costs. Moreover, we show that the proposed coarsening scheme can be cast as a parametric perturbation of the MPC problem and thus the exponential decay condition holds. We use a heating, ventilation, and air conditioning plant case study with real data to demonstrate the proposed approach. Specifically, we show that computational times can be reduced by two orders of magnitude while increasing the closed-loop cost by only 3\%.
\end{abstract}

% Note that keywords are not normally used for peerreview papers.
%% \begin{IEEEkeywords}
%%   model predictive control, multi-scale, coarsening, sensitivity
%% \end{IEEEkeywords}

% For peer review papers, you can put extra information on the cover
% page as needed:
% \ifCLASSOPTIONpeerreview
% \begin{center} \bfseries EDICS Category: 3-BBND \end{center}
% \fi
%
% For peerreview papers, this IEEEtran command inserts a page break and
% creates the second title. It will be ignored for other modes.
\IEEEpeerreviewmaketitle

\section{Introduction}

There is a growing need for model predictive control (MPC) formulations that can span multiple timescales\cite{baldea2007control,barrows2014time,falcone2007predictive,jackson2003temporal,dowling2017multi}. For instance, in energy system applications, one often needs to make planing decisions over months while capturing high-frequency (second-to-second) disturbances. Tractability issues arise in multiscale applications because of the need to use fine time discretization grids and/or long horizons. Strategies to deal with such issues include hierarchical decomposition, time-domain decomposition, sensitivity-based approximations, and time coarsening. 

Hierarchical decomposition deals with tractability issues by creating a family of controllers of different complexity. This approach typically uses low-complexity formulations that are obtained by reducing the model or by making assumptions such as periodicity. As a result, most of the existing hierarchical schemes do not have optimality guarantees (do not solve the original problem) \cite{scattolini2009architectures,picasso2010mpc,kumar2018hierarchical}. In time-domain decomposition, one partitions the horizon, and a coordination scheme is used to enforce convergence to the optimal solution. This approach is scalable and provides optimality guarantees but slow convergence rates are often observed  \cite{boyd2011distributed,geoffrion1972generalized,shin2019parallel,kozma2015benchmarking,giselsson2013accelerated}.  Sensitivity-based approximation strategies deal with tractability issues by computing a single Newton-type step at every sampling time. These approaches provide optimality guarantees but their scalability is limited by linear algebra (which is in turn limited by the time resolution and horizon used in formulating the MPC problem) \cite{diehl2002real,ohtsuka2004continuation,zavala2009advanced,diehl2009efficient,zavala2010real}.  

Time coarsening constructs an approximate representation of the problem by using a grid of sparse resolution (see Fig. \ref{fig:strategies}). In \cite{zavala2016new}, an architecture that combines hierarchical and time decomposition is proposed.  Here, hierarchical controllers of different complexity are obtained by using different time discretization resolutions. This approach is inspired by multi-grid schemes, which have been used for the solution of partial differential equations \cite{borzi2009multigrid}. In the context of multigrid, it has been observed that grid coarsening can be seen as a variable aggregation strategy and this observation has been used to derive more general {\it algebraic coarsening} schemes \cite{brandt2000general,notay2010aggregation}. In these schemes, one can aggregate variables and constraints in a flexible manner by exploiting the underlying algebraic structure (e.g., networks) \cite{shin2018multi,shin2019hierarchical}. The move-blocking MPC strategy reported in \cite{cagienard2007move,gondhalekar2010least} can be regarded as a special type of algebraic coarsening (this aggregates controls but not states), although the authors did not explicitly mention such a connection. 

In this work, we seek to mitigate tractability issues of MPC by using a time-coarsening strategy that we call {\em diffusing-horizon MPC}. Here, the original MPC problem is {\it coarsened} by applying a variable and constraint aggregation scheme over a coarse grid. The key design feature of the coarsening strategy is that the grid becomes exponentially more sparse as one moves forward in time (we call this exponential coarsening). This design is justified by a recently established property of optimal control problems which is known as {\it exponential decay of sensitivity} (EDS) \cite{xu2018exponentially,na2020exponential}. In technical terms, this property states that the effect of {\color{blue} parametric (data)} perturbations at a future time decays exponentially as one moves backward in time. In practical terms, EDS indicates that information in the far future has a negligible impact on the current control action. This intuitive property has been observed empirically in many applications but a precise theoretical characterization of this phenomenon has only been reported recently. Existing work has established that EDS holds for linear quadratic problems under a strong convexity assumption \cite{xu2018exponentially}. EDS has also been shown to hold for nonlinear problems in the vicinity of a solution that satisfies {\color{blue} the strong second-order conditions} (which can be induced by using convex stage costs) \cite{na2020exponential}. Intuitively, EDS holds under such settings because convex stage costs tend to naturally damp the effect of perturbations. {\color{blue} In this work, we establish conditions under which EDS holds for MPC formulations with linear dynamics and linear costs (as those encountered in economic MPC). The challenge here is that the problem is a linear program (LP)  and thus there is no concept of second order conditions (there is no concept of curvature); as such, a different type of analysis is needed. Notably, our theoretical results indicate that EDS can be established by characterizing the constraint set.} Moreover, we show that time-coarsening strategies can be cast as parametric perturbations. This allows us to use EDS to characterize the effect of coarsening {\color{blue} and to justify the proposed exponentially sparsifying time grid.} {\color{blue} A exponential coarsening approach was recently proposed in the context of continuous-time linear-quadratic optimal control problems \cite{grune2020exponential,grune2020abstract}. Here, exponential decay of sensitivity is established and this was used to show decay of the impact of the discretization error. The analysis provided in that work does not handle inequality constraints and requires quadratic costs. We also highlight that exponential coarsening has been explored in the literature and is used in commercial MPC products (see \cite{tan2016model} and references therein). Our contribution is a theoretical justification of why exponential coarsening works.} We use a case study for a heating, ventilation, and air conditioning (HVAC) system to show that the diffusing-horizon MPC strategy can yield dramatic reductions in computational time with small sacrifices in optimality.  

The paper is organized as follows: In Section \ref{sec:settings}, we introduce the nomenclature and problem settings. In Section \ref{sec:eds}, we establish EDS for the problem of interest. In Section \ref{sec:crs}, we introduce coarsening methods, show how to cast these as parametric perturbations, and use this notion to analyze how the effect of coarsening propagates in time. In this section we also present several coarsening strategies, including the diffusing-horizon scheme. In Section \ref{sec:cstudy}, we demonstrate our developments using an HVAC central plant control problem.

\begin{figure*}[t]
  \centering
  \begin{tikzpicture}
    % original
    \foreach \x in {1,...,30}
    \node[circle,fill,scale=.4]  at ((\x/2-1/2,0) {};
    \node at (0,0) {$|$};
    \node at (14.5,0) {$|$};
    \draw (0,0)--(14.5,0);
    % equal-spacing
    \foreach \x in {1,4,7,10,13,16,19,22,25,28,30}
    \node[circle,fill,scale=.4]  at ((\x/2-1/2,-1) {};
    \node at (0,-1) {$|$};
    \node at (14.5,-1) {$|$};
    \draw (0,-1)--(14.5,-1);
    % full-then-sparse
    \foreach \x in {1,...,10,30}
    \node[circle,fill,scale=.4]  at ((\x/2-1/2,-2) {};
    \node at (0,-2) {$|$};
    \node at (14.5,-2) {$|$};
    \draw (0,-2)--(14.5,-2);
    % diffusing-horizon
    \foreach \x in {1,2,3,4,5,6,7,11,15,21,30}
    \node[circle,fill,scale=.4]  at ((\x/2-1/2,-3) {};
    \node at (0,-3) {$|$};
    \node at (14.5,-3) {$|$};
    \draw (0,-3)--(14.5,-3);
    \node at (0,-3.5) {$1$};
    \node at (14.5,-3.5) {$N$};
  \end{tikzpicture}
  \caption{Schematic representation of different coarsening strategies. From top to bottom: full resolution (no coarsening), equal-spacing scheme, full-then-sparse scheme, diffusing-horizon (exponential coarsening) scheme.}\label{fig:strategies}
\end{figure*}

\section{Problem Setting}\label{sec:settings}

\subsection{Notation}
The set of real numbers and the set of integers are denoted by $\mathbb{R}$ and $\mathbb{I}$, respectively, and we define $\mathbb{I}_{M:N}:=\mathbb{I}\cap[M,N]$, $\mathbb{I}_{>0}:=\mathbb{I}\cap(0,\infty)$, $\mathbb{R}_{>0}:=(0,\infty)$, $\mathbb{I}_{\geq 0}:=\mathbb{I}\cap[0,\infty)$, $\mathbb{R}_{\geq 0}:=[0,\infty)$, and $\overline{\mathbb{R}}:=\mathbb{R}\cup\{-\infty,\infty\}$.
Absolute values of real numbers and cardinality of sets are denoted by $|\cdot|$. Euclidean norms of vectors and induced-Euclidean norms of matrices are denoted by$\Vert\cdot\Vert$. By default, we treat vectors as column vectors and we use the syntax:
\begin{align*}
  \{v_i\}_{i=m}^n=\begin{bmatrix}v_{m}\\ \vdots\\ v_{{n}}\end{bmatrix},\{a_{i,j}\}_{i,j=m}^n=\begin{bmatrix}a_{m,m}&\cdots&a_{{m},{n}}\\ \vdots&\ddots&\vdots\\ a_{{n},{m}}&\cdots&a_{{n},{n}}\end{bmatrix},
\end{align*}
and, more generally,
\begin{align*}
  \{v_i\}_{i\in \mathcal{I}}=\begin{bmatrix}v_{i_1}\\ \vdots\\ v_{i_{m}}\end{bmatrix},\{a_{i,j}\}_{i\in \mathcal{I},j\in \mathcal{J}}=\begin{bmatrix}a_{i_1,j_1}&\cdots&a_{i_{1},j_{n}}\\ \vdots&\ddots&\vdots\\ a_{i_{m},j_{1}}&\cdots&a_{i_{m},j_{n}}\end{bmatrix},
\end{align*}
where $\mathcal{I}=\{i_1<\cdots<i_{m}\}$ and $\mathcal{J}=\{j_1<\cdots<j_{n}\}$;
we use $v[i]$ to denote the $i$th component of $v$; $M[i,j]$ to denote the $[i,j]$th component of $M$; $v[\mathcal{I}]:=\{v[i]\}_{i\in \mathcal{I}}$; $M[\mathcal{I},\mathcal{J}]:=\{M[i,j]\}_{i\in \mathcal{I},j\in \mathcal{J}}$; $(v_1,v_2,\cdots,v_n):=[v_1^\top\;v_2^\top\;\cdots\;v_n^\top]^\top$; and 
\begin{align*}
  \diag(M_1,\cdots,M_n):=\begin{bmatrix}
  M_1\\
  &\ddots\\
  &&M_n
  \end{bmatrix}.
\end{align*}
For matrices $A,B$, notation $A\succeq B$ indicates that $A-B$ is positive semi-definite while $A\geq B$ represents a component-wise inequality. The smallest and largest singular value of matrix $M$ are denoted by $\sigma_{\min}(M)$ and $\sigma_{\max}(M)$, respectively. The line segment between two vectors $d,d'$ are denoted by $[d,d']:=\{(1-s)d+sd':s\in[0,1]\}$. We call a collection $\{X_1,\cdots,X_n\}$ of subsets of set $X$ a partition of $X$ if the following conditions are satisfied: (i) $\bigcup_{i=1}^n X_i = X$, (ii) $X_i\neq \emptyset$ for any $i\in\mathbb{I}_{1:n}$, and (iii) $X_i\cap X_j=\emptyset$, for any $i,j\in\mathbb{I}_{1:n}$ and $i\neq j$. {\color{blue} We denote $(\cdot)_+:=\max(0,\cdot)$.}

\subsection{MPC Problem}
We consider an MPC problem of the form:
\begin{subequations}\label{prob:mpc}
  \begin{align}
    \min_{\{x_i,u_i\}_{i=1}^N}\;&\sum_{i=1}^N q_i^\top x_i + r_i^\top u_i\\
    \st\;    &x_1 = v_1\\
    &x_{i}= {A}_{i-1}x_{i-1}+B_{i-1}u_{i-1}+ v_{i},\;i\in\mathbb{I}_{2:N}\label{prob:mpc-con-1}\\
    &E_ix_i+F_iu_i=w_i,\;i\in\mathbb{I}_{1:N}\label{prob:mpc-con-2}\\
    &\underline{x}_i\leq x_i\leq \overline{x}_i,\; i\in\mathbb{I}_{1:N}\label{prob:mpc-con-3}\\
    &\underline{u}_i\leq u_i\leq \overline{u}_i,\; i\in\mathbb{I}_{1:N}\label{prob:mpc-con-4}.
  \end{align}
\end{subequations}
Here, $x_i\in\mathbb{R}^{n_x}$ are the state variables; $u_i\in\mathbb{R}^{n_u}$ are the control variables; the {\em parameters} $q_i\in\mathbb{R}^{n_x}$, $r_i\in\mathbb{R}^{n_u}$, $v_i\in\mathbb{R}^{n_x}$, and $w_i\in\mathbb{R}^{n_w}$ are {\color{blue} the problem {\em data} (e.g., disturbance forecasts and costs)}. Variable $v_1\in\mathbb{R}^{n_x}$ can be regarded as the initial condition. Symbols $A_i\in\mathbb{R}^{n_x\times n_x}$ and $B_i\in\mathbb{R}^{n_x\times n_u}$ denote the time-varying system matrices; $E_i\in\mathbb{R}^{n_w\times n_x}$ and $F_i\in\mathbb{R}^{n_w\times n_u}$ are constraint matrices; $\underline{x}_i,\overline{x}_i\in\mathbb{R}^{n_x}$ are lower and upper bounds of state variables; $\underline{u}_i,\overline{u}_i\in\mathbb{R}^{n_u}$ are lower and upper bounds for control variables, and $N\in\mathbb{I}_{>0}$ is the horizon length.  Constraints \eqref{prob:mpc-con-1}-\eqref{prob:mpc-con-2} allow us to accommodate models with dynamic and algebraic constraints and initial and terminal conditions. Problem \eqref{prob:mpc} is a linear program and has been widely studied in the context of economic MPC  \cite{dowling2017multi,kumar2018hierarchical,kumar2019stochastic,risbeck2019economic}. {\color{blue} In this work we focus on the computational tractability of deterministic MPC formulations. Issues associated with uncertainties (e.g., forecast errors) are assumed to be dealt with via feedback (as in a traditional MPC approach).}

Problem \eqref{prob:mpc} can be written in the following compact form:
\begin{subequations}\label{prob:lp-detail}
  \begin{align}
    \min_{\{z_i\}_{i=1}^N}\;&\sum_{i=1}^Np_i^\top z_i\\
    \st\;&G_{i,i-1}z_{i-1}+G_{i,i} z_{i} \geq d_i,\;i\in\mathbb{I}_{1:N}\label{prob:lp-detail-dyn}
  \end{align}
\end{subequations}
Here, $z_i\in\mathbb{R}^{{n}}$ are the primal variables; $p_i\in\mathbb{R}^{n}$ are the cost vectors;  $d_i\in\mathbb{R}^{{m}}$ are the data vectors; and $G_{i,j}\in\mathbb{R}^{{m}\times {n}}$ are the constraint mapping matrices. {\color{blue} We denote the dual variables for \eqref{prob:lp-detail-dyn} as $\lambda_i\in\mathbb{R}^{m}$.} Here, we let $G_{1,0}:=0$ and $z_0=0$ (for convenience). One can derive \eqref{prob:lp-detail} from \eqref{prob:mpc} by defining, for $i\in\mathbb{I}_{1:N}$, $z_i:=(x_i,u_i)$, $p_i:=(q_i,r_i)$, $d_i:=(v_{i},-v_{i},w_{i},-w_{i},\underline{x}_i,-\overline{x}_i,\underline{u}_i,-\overline{u}_i)$,
%% {\arraycolsep=1.4pt
\begin{align*}
  G_{i,i-1}&:=
  \begin{bmatrix}
    -A_{i-1}&-B_{i-1}\\
    A_{i-1}&B_{i-1}\\
    &\\
    &\\
    &\\
    &\\
    &\\
    &
  \end{bmatrix},
  G_{i,i}:=\begin{bmatrix}
  {I}_{n_x}&\\
  -{I}_{n_x}&\\
  E_i & F_i \\
  -E_i & -F_i \\
  {I}_{n_x}&\\
  -{I}_{n_x}&\\
  &{I}_{n_u}\\
  &- {I}_{n_u}
  \end{bmatrix},
\end{align*}
$m:=4n_x+2n_u+2n_w$, and $n:=n_x+n_u$. In our analysis we will also use the compact form: 
\begin{subequations}\label{prob:lp}
  \begin{align}
    \min_{z_{}}\;& \bp^\top \bz_{}\\
    \st\;
    &\bG\bz_{} \geq \bd,\label{prob:lp-pon}
  \end{align}
\end{subequations}
where $\bz\in\mathbb{R}^{nN}$ is the decision variable vector, $\bp\in\mathbb{R}^{nN}$ is the cost vector, $\bG\in\mathbb{R}^{mN\times nN}$ is the constraint mapping matrix, $\bd\in\mathbb{R}^{mN}$ is the data vector, $\blambda\in\mathbb{R}^{mN}$ is the dual variable vector.
One can derive \eqref{prob:lp} from \eqref{prob:lp-detail} by defining $\bz:=\{z_i\}_{i=1}^N$, $\blambda:=\{\lambda_i\}_{i=1}^N$, $\bd:=\{d_i\}_{i=1}^N$, $\bp:=\{p_i\}_{i=1}^N$, and $\bG:=\{G_{i,j}\}_{i,j=1}^N$ (for convenience we let $G_{i,j}=0$ unless $0\leq i-j\leq 1$). We denote Problem \eqref{prob:lp} (equivalently, \eqref{prob:mpc} or \eqref{prob:lp-detail}) as a parametric optimization problem $P(\bd; \bG,\bp)$ or simply $P(\bd)$. We observe that the feasible set of $P(\bd;\bG,\bp)$ is compact and that $\bG[\mathbb{I}_{m(i-1)+1:mi},\mathbb{I}_{n(j-1)+1:nj}] \neq 0$ only if $0\leq i-j \leq 1$. Vectors and matrices that are associated with more than one time index are denoted by boldface letters (e.g., $\bz$, $\bG$, $\bp$, and $\bd$).

\section{Exponential Decay of Sensitivity}\label{sec:eds}
In this section we study the sensitivity of solutions $\bz^*(\bd)=(z^*_1(\bd),\cdots,z^*_N(\bd))$ of $P(\bd)$ with respect to data perturbations in $\bd=(d_1,\cdots,d_N)$. In other words, we aim to address the question {\it how does the solution $\bz^*(\bd)$ changes as the parameter (data) $\bd$ changes?} Studies on sensitivity of LPs go back to the pioneering work of Robinson \cite{robinson1973bounds}, where the author estimated a solution sensitivity bound against perturbations on cost vectors, constraint coefficient matrices, and constraint right-hand side vectors. Robinson used Hoffman's theorem as the main technical tool to establish these results \cite{hoffman2003approximate}. Similar results have also been established in \cite{schrijver1998theory}, where an upper bound of the Lipschitz constant of the solution mapping is estimated. Here we aim to obtain a similar result but we are interested in deriving {\it {\color{blue} stage-wise}} sensitivity bounds, under the context of the MPC problem \eqref{prob:mpc}. Specifically, in our main theorem, we establish how the solution at a certain time stage changes in the face of a perturbation at another time stage (upper bounds of {\color{blue} stage-wise} Lipschitz constants are also derived). This perturbation analysis setting is illustrated in Fig. \ref{fig:eds}. In our analysis, we use a fundamentally different approach than those reported in the literature; specifically, we establish sensitivity bounds for the {\it basic solutions} of the LP and then establish sensitivity bounds for the optimal solutions around such basic solutions. This approach allows us to handle non-unique solutions (the solution set is not a singleton), which is a key difference over existing sensitivity results for convex quadratic and nonlinear programs \cite{xu2018exponentially,na2020exponential}. 

\begin{figure*}[t]
  \centering
  \begin{tikzpicture}
    \foreach \x in {1,...,30}
    \node[circle,fill,scale=.4]  at ((\x/2-1/2,0) {};
    \node at (0,0) {$|$};
    \node at (14.5,0) {$|$};
    \node at (0,-.5) {$1$};
    \node at (14.5,-.5) {$N$};
    \node at (4.5,-.5) {$i$};
    \node at (9.5,-.5) {$j$};
    \node at (4.5,.5) {$z^*_i(\bd)$};
    \node at (9.5,.5) {$d_j$};
    \node at (7,-1.5) {$|i-j|$};
    \draw[<->] (4.5,-1)--(9.5,-1);
    \draw (0,0)--(14.5,0);
  \end{tikzpicture}
  \caption{Illustration of perturbation analysis setting.}\label{fig:eds}
\end{figure*}
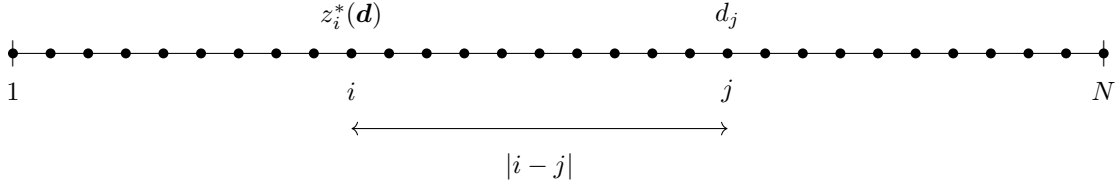

\subsection{Preliminaries}
In this section we revisit basic concepts of linear programming that will be used for our analysis. 
\begin{definition}[Set of admissible data]
  The set of admissible data $\mathbb{D}\subseteq \mathbb{R}^{mN}$ is defined as: 
  \begin{align*}
    \mathbb{D}:=\left\{\bd\in\mathbb{R}^{mN}: P(\bd) \text{ is feasible.} \right\}
  \end{align*}
\end{definition}
\begin{proposition}\label{prop:conv}
  The set $\mathbb{D}$ is convex.
\end{proposition}
\begin{proof}
  Let $\bd,\bd'\in\mathbb{D}$, and $\bz,\bz'\in\mathbb{R}^{{nN}}$ be feasible solutions of $P(\bd)$ and $P(\bd')$, respectively. From feasibility, we have that $\bG\bz\geq \bd$ and $\bG\bz'\geq \bd'$. By taking a linear combination, we have that $\bG((1-s)\bz+s\bz') \geq (1-s)\bd+s\bd'$ holds for any $s\in[0,1]$. Thus, $P((1-s)\bd+s\bd')$ is feasible for any $s\in[0,1]$ and this implies $(1-s)\bd+s\bd'\in \mathbb{D}$ for any $s\in[0,1]$. Therefore, the set $\mathbb{D}$ is convex.
\end{proof}
\begin{proposition}\label{prop:existence}
  A solution of $P(\bd)$ exists for any $\bd\in\mathbb{D}$.
\end{proposition}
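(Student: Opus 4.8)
The plan is to invoke the Weierstrass extreme value theorem. Since $\bd\in\mathbb{D}$, the feasible set $\mathcal{F}(\bd):=\{\bz\in\mathbb{R}^{nN}:\bG\bz\geq\bd\}$ is nonempty by the definition of $\mathbb{D}$, and the objective $\bz\mapsto\bp^\top\bz$ is linear and hence continuous. It therefore suffices to argue that $\mathcal{F}(\bd)$ is compact, because a continuous function attains its minimum on a nonempty compact set, and such a minimizer is precisely an optimal solution of $P(\bd)$.

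First I would establish that $\mathcal{F}(\bd)$ is closed: it is the intersection of the finitely many closed half-spaces determined by the rows of $\bG\bz\geq\bd$, and an intersection of closed sets is closed. Next I would establish boundedness by appealing to the box constraints \eqref{prob:mpc-con-3}--\eqref{prob:mpc-con-4} of the original formulation \eqref{prob:mpc}, which are embedded as rows of $\bG\bz\geq\bd$ through the identity blocks $\pm I_{n_x}$ and $\pm I_{n_u}$ appearing in $G_{i,i}$. These rows confine each $x_i$ to $[\underline{x}_i,\overline{x}_i]$ and each $u_i$ to $[\underline{u}_i,\overline{u}_i]$, which are finite boxes because the bounds are real vectors; consequently every feasible $\bz$ lies in a bounded set. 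This is exactly the observation already recorded after \eqref{prob:lp} that the feasible set of $P(\bd)$ is compact. Combining closedness and boundedness gives compactness, and Weierstrass then yields existence.

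I do not expect a genuine obstacle here; the only point requiring care is boundedness, which hinges on the box bounds being finite, as guaranteed by the modeling assumptions $\underline{x}_i,\overline{x}_i\in\mathbb{R}^{n_x}$ and $\underline{u}_i,\overline{u}_i\in\mathbb{R}^{n_u}$. Were the bounds permitted to be infinite, compactness would fail and one would instead have to show the linear objective is bounded below on the (possibly unbounded) polyhedron, e.g.\ via LP duality or a resolution-theorem decomposition of $\mathcal{F}(\bd)$ into its vertices and recession cone; but under the stated finiteness this more elaborate route is unnecessary.
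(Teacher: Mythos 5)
Your proposal is correct and follows essentially the same route as the paper's proof: both invoke Weierstrass on the nonempty (by $\bd\in\mathbb{D}$), compact feasible set with a continuous linear objective, with boundedness coming from the box constraints \eqref{prob:mpc-con-3}--\eqref{prob:mpc-con-4} embedded in $\bG\bz\geq\bd$. You merely spell out the closedness and boundedness arguments in more detail than the paper, which states them in a single sentence.
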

\begin{proof}
  A solution exists because $P(\bd)$ has a continuous objective and a nonempty feasible set (since $\bd\in\mathbb{D}$) that is also compact (the boundedness comes from the inspection of \eqref{prob:mpc-con-3}-\eqref{prob:mpc-con-4}).
\end{proof}

\begin{definition}[Basis, basic solution, basic feasible solution, and basic optimal solution]
  Consider $P(\bd)$ with $\bd\in\mathbb{D}$. 
  \begin{itemize}
  \item[(a)] A set $B\subseteq\mathbb{I}_{1:mN}$ is called a basis of $P(\bd)$ if $\bG[B,:]$ is non-singular.
  \item[(b)] A vector $\bz^B(\bd)\in\mathbb{R}^{nN}$ is called a basic solution of $P(\bd)$ associated with basis $B$ if the following holds:
    \begin{align}\label{eqn:bsol}
      \bG[B,:]\bz^B(\bd)= \bd[B].
    \end{align}
  \item[(c)] A basis $B$ is called feasible for $P(\bd)$, and $\bz^B(\bd)$ is called a basic feasible solution if $\bz^B(\bd)$ is feasible to $P(\bd)$.
  \item[(d)] A basis $B$ is called optimal for $P(\bd)$, and $\bz^B(\bd)$ is called a basic optimal solution if $\bz^B(\bd)$ is optimal to $P(\bd)$.
  \end{itemize}
\end{definition}
The optimality of a basis is often defined by checking if the associated reduced cost is non-negative (see  \cite{bertsimas1997introduction}). A basic solution can have a negative component in the reduced cost even if it is optimal in the case where it is degenerate. Thus, the non-negativity of the reduced cost is a sufficient condition (but not necessary) for optimality. In this paper we call a basis optimal if the associated basic solution is optimal. We also note that, if a set $B\subseteq\mathbb{I}_{1:mN}$ is a basis for some $P(\bd)$ with $\bd\in\mathbb{D}$, then $B$ is a basis for $P(\bd')$ for any $\bd'\in\mathbb{D}$. Thus, we will write that $B$ is a basis for $P(\cdot)$. Here, we observe that the basic solution mapping $\bz^B:\mathbb{R}^{mN}\rightarrow \mathbb{R}^{nN}$ is linear with respect to data $\bd$.

\begin{definition}[Optimal bases]
  The collection $\mathbb{B}_{D}$ of optimal bases for $D\subseteq\mathbb{D}$ is defined as:
  \begin{align*}
    \mathbb{B}_{D}&:=\left\{B\subseteq\mathbb{I}_{1:mN}:\exists \bd\in D \text{ s.t. }B \text{ is optimal for }P(\bd)\right\}
  \end{align*}
  Furthermore, we let $\mathbb{B}:=\mathbb{B}_{\mathbb{D}}$.
\end{definition}

\subsection{Perturbation and Basis}
We aim to analyze the path of $\bz^*(\bd)$ (not necessarily unique) with respect to the perturbation on $\bd$. In particular, we consider data points $\bd,\bd'\in\mathbb{D}$ and study how the solution changes along the line segment $[\bd,\bd']$. We use the notation:
\begin{align*}
  \bd^s:=(1-s)\bd+s\bd',\; s\in[0,1].
\end{align*}
We use basic solutions in order to estimate the change of the optimal solutions. The following theorem provides a useful connection between basic and optimal solutions. {\color{blue} Specifically, the analysis of the optimal solution is difficult due to the absence of the explicit relationship between the solution and the data. However, the basic solution has an explicit relationship with the data (given by \eqref{eqn:bsol}); as such, by establishing the connection between the basic solution and optimal solution, we can characterize the effect of the data on the optimal solution.}

\begin{theorem}\label{thm:basic}
  Consider $\bd,\bd'\in\mathbb{D}$. There exists a finite sequence of real numbers $\{s_0=0<\cdots<s_{N_d}=1\}$ and a finite sequence of bases $\{B_1,\cdots,B_{N_d}\in\mathbb{B}_{[\bd,\bd']}\}$ such that the following holds.
  \begin{itemize}
  \item[(a)] For $k\in\mathbb{I}_{1:N_d}$, $B_k$ is optimal for $P(\bd^s)$ with any $s\in[s_{k-1},s_k]$.
  \item[(b)] For $k\in\mathbb{I}_{1:N_d-1}$, $\bz^{B_k}(\bd^{s_k})=\bz^{B_{k+1}}(\bd^{s_k})$.
  \end{itemize}
\end{theorem}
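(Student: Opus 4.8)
The plan is to reduce the analysis of the (implicitly defined, possibly non-unique) optimal solution to the analysis of \emph{basic} solutions, which depend explicitly and affinely on the data through \eqref{eqn:bsol}, and then to patch the resulting affine pieces together into a continuous path along the segment $[\bd,\bd']$.

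First I would restrict attention to a finite, well-behaved family of bases. By Proposition~\ref{prop:conv}, $\bd^s\in\mathbb{D}$ for every $s\in[0,1]$, so by Proposition~\ref{prop:existence} each $P(\bd^s)$ has a solution; since the feasible set is a nonempty compact polyhedron, the minimum is attained at a vertex, and LP duality then yields a basis $B$ with feasible primal $\bz^B(\bd^s)$ and a \emph{nonnegative} associated dual, i.e., $\bG[B,:]^{-\top}\bp\ge\bzero$. Let $\mathcal{F}$ denote the (finite) collection of all such bases $B\subseteq\mathbb{I}_{1:mN}$. The gain is twofold: the dual certificate $\blambda^B$ defined by $\blambda^B[B]=\bG[B,:]^{-\top}\bp$ and $\blambda^B[B^c]=\bzero$ is \emph{independent of} $s$, so for $B\in\mathcal{F}$ feasibility of $\bz^B(\bd^s)$ already implies optimality; and $\bz^B(\bd^s)=\bG[B,:]^{-1}\bd^s[B]$ is affine in $s$, so the set $S_B:=\{s\in[0,1]:\bG\bz^B(\bd^s)\ge\bd^s\}$ on which $B$ is optimal is cut out by finitely many affine inequalities in $s$ and is therefore a closed subinterval of $[0,1]$.

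Next I would build the partition. By the existence argument above, $\{S_B\}_{B\in\mathcal{F}}$ is a finite cover of $[0,1]$ by closed intervals. Collecting the endpoints of these intervals that lie in $[0,1]$ and sorting the distinct values yields $0=s_0<\cdots<s_{N_d}=1$. On each open interval $(s_{k-1},s_k)$ there is, by construction, no endpoint of any $S_B$; hence any $B\in\mathcal{F}$ optimal at the midpoint satisfies $(s_{k-1},s_k)\subseteq S_B$, and closedness of $S_B$ upgrades this to $[s_{k-1},s_k]\subseteq S_B$. Choosing one such $B_k$ for each $k$ yields claim (a) immediately, and places $s_k$ in both $S_{B_k}$ and $S_{B_{k+1}}$, so that $B_k$ and $B_{k+1}$ are \emph{both} optimal for $P(\bd^{s_k})$.

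The hard part is claim (b): the two pieces must not merely be optimal at the shared breakpoint but must take the \emph{same value} there, $\bz^{B_k}(\bd^{s_k})=\bz^{B_{k+1}}(\bd^{s_k})$; i.e., the optimal path must be continuous rather than jumping between two distinct optimal vertices of $P(\bd^{s_k})$. I would establish this by a continuation (parametric-simplex) argument: as $s\uparrow s_k$ the moving vertex $\bz^{B_k}(\bd^s)$ reaches a point $\bz^\star:=\bz^{B_k}(\bd^{s_k})$ at which a previously inactive constraint $\ell\notin B_k$ becomes active (this is precisely what terminates $S_{B_k}$), so $\bz^\star$ is a \emph{degenerate} optimal vertex with more than $nN$ active constraints. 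The successor basis can then be taken inside the active set of $\bz^\star$, and since any basis drawn from that active set reproduces $\bz^\star$ (the point already solves the corresponding square nonsingular system), the degenerate pivot preserves the current point, giving $\bz^{B_{k+1}}(\bd^{s_k})=\bz^\star$. The structural fact that makes this work, and that rules out genuine jumps, is that only the right-hand side $\bd$ varies while $\bG$ is \emph{fixed}, so constraint boundaries translate but never rotate and the optimal vertex is forced to move continuously. The two technical points I expect to be delicate are (i) guaranteeing that a successor basis in $\mathcal{F}$ exists that both passes through $\bz^\star$ and is optimal immediately to the right of $s_k$, and (ii) ruling out stalling at degenerate breakpoints, which I would handle with an anti-cycling (lexicographic) pivoting rule together with the finiteness of $\mathcal{F}$ to ensure that the $s_k$ are strictly increasing and that the construction terminates at $s_{N_d}=1$.
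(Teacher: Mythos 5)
Your construction for part (a) is sound and parallels the paper's first step (Lemma~\ref{lem:sub-1}): where the paper compares the objective values $\pi_{(\ell)}(s)$ of all basic solutions and collects the endpoints of the sets $\Pi_{(\ell)}$ on which each basis attains the optimum, you work with dual-feasible bases and the primal-feasibility intervals $S_B$; both routes produce the breakpoints $s_0<\cdots<s_{N_d}$ and statement (a). (The LP fact you invoke --- that every solvable $P(\bd^s)$ admits a basis that is simultaneously primal and dual feasible --- needs a short complementary-slackness argument to extend a dual basic optimal solution's support to a basis inside the active set of an optimal vertex, but this is standard.)

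The gap is in part (b), which is the crux of the theorem. What you actually prove is only the easy direction: \emph{if} a basis $B_{k+1}$ that is optimal on $[s_k,s_{k+1}]$ can be chosen inside the active set $\widehat{B}$ of $\bz^\star:=\bz^{B_k}(\bd^{s_k})$, then $\bz^{B_{k+1}}(\bd^{s_k})=\bz^\star$, because any nonsingular basis drawn from the active set reproduces the same point. But the assertion that ``the successor basis can then be taken inside the active set of $\bz^\star$'' is precisely what must be established: a priori, every basis optimal on $(s_k,s_{k+1})$ could have its basic solution at $s_k$ located at a \emph{different} optimal vertex of $P(\bd^{s_k})$, i.e., the path could jump between optimal vertices of a degenerate problem. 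Your appeal to ``boundaries translate but never rotate'' is intuition, not an argument, and deferring your points (i) and (ii) to a parametric-simplex routine with lexicographic pivoting is not a proof either: under degeneracy several dual pivots may be required at a single breakpoint, and one must show that the basis eventually selected both remains inside $\widehat{B}$ (so that it reproduces $\bz^\star$) and is primal feasible immediately to the right of $s_k$. The paper closes exactly this gap in Lemma~\ref{lem:sub-2}: it forms the relaxed problem $P_{\widehat{B}}(\bd^s)$ retaining only the constraints active at $\bz^\star$, shows it admits a basic optimal solution for $s>s_k$ (dual feasibility is inherited from the KKT system at $s_k$ and does not depend on the right-hand side, which with weak duality gives boundedness; basic feasible solutions exist since $\widehat{B}\supseteq B_k$ has full rank), chooses $s\in(s_k,s_{k+1})$ close enough to $s_k$ that the finitely many basic solutions associated with bases inside $\widehat{B}$ keep every constraint outside $\widehat{B}$ strictly satisfied, and concludes that the basic optimal solution of the relaxation is feasible, hence optimal, for $P(\bd^s)$; its basis lies inside $\widehat{B}$ and therefore reproduces $\bz^\star$ at $s_k$. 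An argument of this type (or a fully worked-out anti-cycling pivot analysis anchored at $\bz^\star$) is required; as written, your proposal asserts the key step rather than proving it.
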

To prove Theorem \ref{thm:basic}, we first establish the following technical lemmas.
\begin{lemma}\label{lem:sub-1}
  There exists a finite sequence of real numbers $\{s_0=0<\cdots<s_{N_d}=1\}$ and a finite sequence of nonempty collections of bases $\{\mathbb{B}_1,\cdots,\mathbb{B}_{N_d}\subseteq\mathbb{B}_{[\bd,\bd']}\}$  such that the following holds for $k\in\mathbb{I}_{1:N_d}$.
  \begin{itemize}
  \item[(a)] If $B_k\in\mathbb{B}_k$, then $B_k$ is optimal for $P(\bd^s)$ with any $s\in[s_{k-1},s_k]$.
  \item[(b)] If $B_k$ is an optimal basis for $P(\bd^s)$ with some $s\in(s_{k-1},s_k)$, then $B_k\in\mathbb{B}_k$.
  \end{itemize}
\end{lemma}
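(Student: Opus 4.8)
The plan is to exploit the fact that there are only finitely many bases (subsets $B\subseteq\mathbb{I}_{1:mN}$ with $\bG[B,:]$ nonsingular, each of cardinality $nN$) and to show that the partition of $[0,1]$ is induced by the breakpoints at which the feasibility status of these bases changes. First I would, for each basis $B$, examine the set $F_B:=\{s\in[0,1]:\bz^B(\bd^s)\text{ is feasible for }P(\bd^s)\}$. Since $\bz^B$ is linear in the data (as noted after the definition of basic solutions) and $\bd^s$ is affine in $s$, every residual $g^B_i(s):=(\bG\bz^B(\bd^s))[i]-\bd^s[i]$ is an affine function of $s$; hence $F_B=\{s:g^B_i(s)\geq 0,\ \forall i\}$ is an intersection of half-lines, i.e., a closed subinterval $[a_B,b_B]$ of $[0,1]$. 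I would then collect all endpoints $\{a_B,b_B\}$ over the finitely many bases, together with $0$ and $1$, into a single finite ordered set $\{s_0=0<\cdots<s_{N_d}=1\}$; these are the candidate breakpoints of the partition.

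The core of the argument is to show that, on each open subinterval $(s_{k-1},s_k)$, the set of optimal bases is constant. The key observation is that the active set $A(\bd^s):=\{i:g^B_i(s)=0\}$ of a feasible basic solution is constant across the open interval: for any $i$, $g^B_i$ is affine and nonnegative on $F_B$, so if it is not identically zero it is strictly monotone and can vanish only at an endpoint of $F_B$; because all such endpoints are breakpoints, $g^B_i$ is either identically zero or strictly positive on $(s_{k-1},s_k)\subseteq\operatorname{int}F_B$. Consequently $A(\bd^s)\equiv A^*_B$ is constant there. I would then invoke LP duality: $\bz^B(\bd^s)$ is optimal iff it is feasible and there exists $\blambda\geq 0$ with $\bG^\top\blambda=\bp$ and $\operatorname{supp}(\blambda)\subseteq A(\bd^s)$. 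Since both $A^*_B$ and the set $\{\blambda\geq 0:\bG^\top\blambda=\bp\}$ are independent of $s$, optimality is an all-or-nothing property on $(s_{k-1},s_k)$. I define $\mathbb{B}_k\subseteq\mathbb{B}_{[\bd,\bd']}$ to be the collection of bases optimal at (equivalently, throughout) this open interval; nonemptiness follows from Proposition \ref{prop:existence} together with the standard fact that a solvable LP over a pointed polyhedron (pointedness being guaranteed here by the identity blocks in $G_{i,i}$) attains its optimum at a basic feasible solution.

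With $\mathbb{B}_k$ so defined, part (b) is immediate: if $B$ is optimal at some interior $s^*$, then $s^*\in F_B$ forces $(s_{k-1},s_k)\subseteq\operatorname{int}F_B$, so $B$ is optimal throughout the open interval and hence $B\in\mathbb{B}_k$. For part (a) I would extend optimality from the open interval to the closed interval $[s_{k-1},s_k]$: feasibility at the endpoints holds because $s_{k-1},s_k\in F_B$, and the dual certificate $\blambda$ obtained on the interior has $\operatorname{supp}(\blambda)\subseteq A^*_B\subseteq A(\bd^{s_{k-1}}),A(\bd^{s_k})$ (active sets can only grow at the endpoints), so the \emph{same} $\blambda$ certifies optimality at both endpoints. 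I expect the main obstacle to be the degeneracy phenomenon flagged before the statement, namely that an optimal basis may carry a negative reduced cost, which rules out the naive ``reduced cost $\geq 0$'' criterion and forces the optimality test to be phrased through dual certificates whose support may in principle vary with $s$. The crucial technical point that resolves this is precisely the constancy of the active set on open subintervals, which renders the support condition $s$-independent there and allows the interior certificate to survive the passage to the closed interval.
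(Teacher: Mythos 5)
Your proposal is correct, but it follows a genuinely different route from the paper's proof. The paper works with the optimal value function: for each basis $B_{(\ell)}\in\mathbb{B}_{[\bd,\bd']}$ it defines $\pi_{(\ell)}(s)$ as the objective of the basic solution (set to $+\infty$ when infeasible), observes that $\pi=\min_\ell \pi_{(\ell)}$ is piecewise linear, takes the breakpoints to be the endpoints of the optimality regions $\Pi_{(\ell)}=\{s\in[0,1]:\pi_{(\ell)}(s)=\pi(s)\}$ (finite unions of closed intervals), and sets $\mathbb{B}_k:=\{B_{(\ell)}:[s_{k-1},s_k]\subseteq\Pi_{(\ell)}\}$, after which (a) and (b) hold essentially by construction. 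You instead place breakpoints only where the \emph{feasibility} status of some basis changes (endpoints of the intervals $F_B$), and then prove that optimality cannot switch between such breakpoints: the active set of each feasible basic solution is constant on each open subinterval (an affine residual that is nonnegative on $F_B$ and vanishes at an interior point must vanish identically), and since only the right-hand side is perturbed, the dual feasible set $\{\blambda\geq 0:\bG^\top\blambda=\bp\}$ is fixed, so the complementary-slackness certificate is an $s$-independent condition; your endpoint argument for (a) — the certificate supported on the interior active set survives at $s_{k-1},s_k$ because active sets can only grow there — is also sound. Your route buys several things: it handles the degeneracy issue transparently (optimality is certified through dual multipliers supported on the active set, matching the paper's definition of an optimal basis rather than a reduced-cost test), it identifies exactly where breakpoints can occur (your all-or-nothing argument in fact shows the paper's breakpoints are contained in yours), and it supplies detail for a step the paper treats briskly (the assertion that each $\Pi_{(\ell)}$ is a finite union of closed intervals). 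The paper's route buys brevity and avoids duality altogether, using only the geometry of piecewise-linear functions. Two harmless slips in yours: a nonvanishing affine residual need not be strictly monotone (it can be a positive constant) — the statement you actually use, that it cannot vanish in the interior of $F_B$ unless identically zero, is the correct one; and the nonemptiness step should note $\bd^s\in\mathbb{D}$ via Proposition \ref{prop:conv} before invoking Proposition \ref{prop:existence}.
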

\begin{proof}
  By the convexity of $\mathbb{D}$ (Proposition \ref{prop:conv}), we have that $[\bd,\bd']\subseteq \mathbb{D}$. Since the feasible set of $P(\bd)$ cannot contain a line (due to the compactness of the feasible set), $P(\bd)$ with $\bd\in\mathbb{D}$ has at least one basic feasible solution \cite[Theorem 2.6]{bertsimas1997introduction}. Furthermore, by the existence of a basic feasible solution and the existence of a solution (Proposition \ref{prop:existence}), there exists a basic optimal solution for any $\bd\in\mathbb{D}$ \cite[Theorem 2.7]{bertsimas1997introduction}. Let $\mathbb{B}_{[\bd,\bd']}=\{B_{(1)},\cdots,B_{(L)}\}$ (note that $\mathbb{B}_{[\bd,\bd']}$ is a finite set, since it is a subset of a power set of finite set). Consider the objective value mapping for the basic solutions $\pi_{(\ell)}:[0,1]\rightarrow\overline{\mathbb{R}}$ for $\ell\in\mathbb{I}_{1:L}$ defined by:
  \begin{align*}
    \pi_{(\ell)}(s) = 
    \begin{cases}
      \bp^\top \bz^{B_{(\ell)}}(\bd^s)& B_{(\ell)} \text{ is feasible to }P(\bd^s),\\
      +\infty&\text{otherwise}.
    \end{cases}    
  \end{align*}
  From the existence of basic optimal solution for $\bd\in\mathbb{D}$, the optimal value mapping of $P(\bd^s)$ with respect to $s$ can be represented as the mapping $\pi:[0,1]\rightarrow\overline{\mathbb{R}}$ defined by:
  \begin{align*}
    \pi(s) := \min_{\ell=1,\cdots,L}\pi_{(\ell)}(s).
  \end{align*}
  From the existence of a solution (Proposition \ref{prop:existence}), we have that $\pi(s)<\infty$ on $[0,1]$. Due to the piece-wise linear nature of $\pi(\cdot)$, we have that $\Pi_{(\ell)}:=\{s\in[0,1]:\pi_{(\ell)}(s)=\pi(s)\}$ for $\ell\in\mathbb{I}_{1:L}$ are obtained as {\it finite unions of closed intervals}. By collecting all the end points, we construct the set:
  \begin{align*}
    \Pi:=\bigcup_{\ell=1}^L \Pi_{(\ell)}\setminus \text{int}(\Pi_{(\ell)}),
  \end{align*}
  where $\text{int}(\cdot)$ denotes the interior set. Note that $\Pi$ is a finite set and that $0,1\in\Pi$. Now choose $\{s_0,\cdots,s_{N_d}\}:=\Pi$ and
  \begin{align*}
    \mathbb{B}_k:=&\left\{B_{(\ell)}\in\mathbb{B}_{[\bd,\bd']}:[s_{k-1},s_k]\subseteq\Pi_{(\ell)} \right\},\;k\in\mathbb{I}_{\color{blue} 1:N_d}.
  \end{align*}
  Since $\Pi$ is constructed from the endpoints of the collections of intervals whose union is equal to $[0,1]$, for each $[s_{k-1},s_k]$, there exists at least one interval, which is a subset of some $\Pi_{(\ell)}$, that contains $[s_{k-1},s_k]$. Thus, each $\mathbb{B}_k$ is nonempty.
  We can observe that for $k\in\mathbb{I}_{1:N_d}$,
  \begin{align*}
    \mathbb{B}_k\cap \{ B_{(\ell)}\in\mathbb{B}_{[\bd,\bd']}: (s_{k-1},s_k)\cap \Pi_{(\ell)} = \emptyset\}=\emptyset,
  \end{align*}
  since $[s_{k-1},s_k]$ is either a subset of some $\Pi_{(\ell)}$ or its interior has empty intersection with $\Pi_{(\ell)}$. We have thus constructed $\mathbb{B}_1,\cdots,\mathbb{B}_K$ in such a way that satisfy (a) and (b).
\end{proof}

\begin{lemma}\label{lem:sub-2}
The following holds for $k\in\mathbb{I}_{1:N_d-1}$; for any $B_k\in\mathbb{B}_k$, there exists $B_{k+1}\in\mathbb{B}_{k+1}$ such that:
  \begin{align*}
    \bz^{B_k}(\bd^{s_k})=\bz^{B_{k+1}}(\bd^{s_k}).
  \end{align*}
\end{lemma}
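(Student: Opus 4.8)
The plan is to fix an arbitrary $B_k\in\mathbb{B}_k$, set $\bar{\bz}:=\bz^{B_k}(\bd^{s_k})$ (which by Lemma~\ref{lem:sub-1}(a) is an optimal, hence feasible, solution of $P(\bd^{s_k})$), and record its active set $\mathcal{A}:=\{i\in\mathbb{I}_{1:mN}:\bG[i,:]\bar{\bz}=\bd^{s_k}[i]\}$, noting that $B_k\subseteq\mathcal{A}$. I would then construct a basis $B\subseteq\mathcal{A}$ that is optimal for $P(\bd^s)$ on an entire right-neighborhood $[s_k,s_k+\epsilon]$. Because $B\subseteq\mathcal{A}$ and $\bG[B,:]$ is nonsingular, \eqref{eqn:bsol} forces $\bz^B(\bd^{s_k})=\bar{\bz}$; and choosing any $\bar s\in(s_k,\min\{s_{k+1},s_k+\epsilon\})$, the optimality of $B$ at $\bar s$ together with Lemma~\ref{lem:sub-1}(b) yields $B\in\mathbb{B}_{k+1}$. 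Setting $B_{k+1}:=B$ then gives exactly $\bz^{B_{k+1}}(\bd^{s_k})=\bar{\bz}=\bz^{B_k}(\bd^{s_k})$, which is the claim.

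To produce $B$, the key device I would use is a local ``directional'' LP at $\bar{\bz}$: minimize $\bp^\top\bdelta$ subject to $\bG[\mathcal{A},:]\bdelta\geq(\bd'-\bd)[\mathcal{A}]$. The point of this LP is that for a basis $B\subseteq\mathcal{A}$ its basic solution $\bdelta_B:=\bG[B,:]^{-1}(\bd'-\bd)[B]$ satisfies $\bz^B(\bd^s)=\bar{\bz}+(s-s_k)\bdelta_B$, so feasibility of $\bdelta_B$ in the local LP is precisely the statement that the constraints active at $\bar{\bz}$ remain satisfied as $s$ increases. An optimal basis $B$ of the local LP therefore delivers two things at once: its dual multiplier, extended by zeros off $\mathcal{A}$, is a nonnegative vector $\blambda$ supported on $B$ with $\bG[B,:]^\top\blambda[B]=\bp$ (a certificate that is independent of $s$), and local primal feasibility ensures $\bz^B(\bd^s)$ stays feasible for small $s-s_k>0$ (inactive constraints remain strict by continuity, active ones by the constructed inequality). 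These are exactly the KKT/complementary-slackness conditions for $P(\bd^s)$, so $\bz^B(\bd^s)$ is optimal on $[s_k,s_k+\epsilon]$.

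The remaining task is to show the local LP attains its minimum, and I expect feasibility of the local LP to be the main obstacle, since the rows of $\mathcal{A}$ could in principle become locally inconsistent as $s$ increases. I would settle this using the global feasibility of the whole segment rather than any local argument: pick any $\bz''$ feasible for $P(\bd')$ (nonempty since $\bd'\in\mathbb{D}$ by Proposition~\ref{prop:existence}) and exploit the identity $\tfrac{1-s}{1-s_k}\bd^{s_k}+\tfrac{s-s_k}{1-s_k}\bd'=\bd^s$, which shows that $\bar{\bz}+(s-s_k)\tfrac{\bz''-\bar{\bz}}{1-s_k}$ is feasible for $P(\bd^s)$ on $[s_k,1]$; reading off the rows in $\mathcal{A}$ as $s\downarrow s_k$ produces a feasible direction for the local LP. Boundedness below is then immediate and in fact equivalent to the optimality of $\bar{\bz}$ at $s_k$: the recession cone of the local feasible set is the tangent cone $\{\bdelta_0:\bG[\mathcal{A},:]\bdelta_0\geq0\}$, on which $\bp^\top\bdelta_0\geq0$ is just the first-order optimality of $\bar{\bz}$. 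Finally, since $B_k\subseteq\mathcal{A}$ has $\bG[B_k,:]$ nonsingular, $\bG[\mathcal{A},:]$ has full column rank and the local feasible set contains no line, so by standard LP theory the feasible and bounded local LP admits a basic optimal solution, i.e. an optimal basis $B\subseteq\mathcal{A}$, completing the construction.
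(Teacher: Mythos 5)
Your construction is sound and, at its core, it is the same as the paper's: your ``local directional LP'' is exactly the paper's relaxed problem $P_{\widehat{B}}(\bd^s)$ (with $\widehat{B}=\mathcal{A}$) after the change of variables $\bz=\bar{\bz}+(s-s_k)\bdelta$, which scales the $s$-dependence out of the data. The paper fixes a suitable $s\in(s_k,s_{k+1})$ first---and must therefore choose $s$ to work simultaneously for all of the finitely many bases contained in $\widehat{B}$---then takes a basic optimal solution of the relaxed problem and argues it is feasible, hence optimal, for $P(\bd^s)$ because it solves a relaxation of $P(\bd^s)$. Your $s$-free reparametrization is a modest but genuine improvement in bookkeeping: the basis $B$ is selected once, independently of $s$, and the neighborhood $[s_k,s_k+\epsilon]$ is chosen afterwards for that single basis, so only continuity of one affine map is needed. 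Your feasibility argument (convex combination with a point feasible for $P(\bd')$) and your boundedness argument (recession cone equals the tangent cone at $\bar{\bz}$, on which $\bp^\top\bdelta_0\geq 0$ by optimality of $\bar{\bz}$) are both correct, and they play the roles of the paper's ``superset of the feasible set of $P(\bd^s)$'' and ``dual feasibility plus weak duality'' arguments, respectively.

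There is, however, one step that overclaims as written. You assert that an optimal basis $B$ of the local LP automatically supplies a nonnegative multiplier $\blambda$ supported on $B$ with $\bG[B,:]^\top\blambda[B]=\bp$. Under the notion of optimal basis that your final existence argument actually delivers (a basis whose basic solution is optimal---which is also the paper's definition), this is false in general: as the paper itself remarks, a degenerate optimal basic solution can be associated with a basis whose reduced costs have negative components. So ``feasible and bounded, hence a basic optimal solution exists'' does not yet yield your KKT certificate. Two standard repairs are available: (i) invoke the stronger fact that a feasible, bounded LP whose feasible set contains no line admits a basis that is simultaneously primal and dual feasible (this is what termination of the simplex method under an anti-cycling rule provides; alternatively, take a basic optimal solution $\bmu^*$ of the dual, note that complementary slackness places its support inside the active rows at any optimal extreme point, and extend that support to a basis within those active rows); or (ii) drop the dual certificate entirely and argue as the paper does: by your affine correspondence, $\bz^{B}(\bd^s)=\bar{\bz}+(s-s_k)\bdelta_B$ is optimal for the relaxed problem for every $s>s_k$, and for $s\in(s_k,s_k+\epsilon)$ it is also feasible for $P(\bd^s)$, hence optimal for $P(\bd^s)$ since the latter is a restriction of the relaxation. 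With either repair, the rest of your argument---uniqueness of the basic solution at $s_k$ forcing $\bz^{B}(\bd^{s_k})=\bar{\bz}$, and Lemma~\ref{lem:sub-1}(b) giving $B\in\mathbb{B}_{k+1}$---goes through.
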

\begin{proof}
    We continue from Lemma \ref{lem:sub-1}; consider the set (not necessarily a basis) $\widehat{B}\subseteq \mathbb{I}_{1:mN}$ of constraint indices of $P(\bd^{s_k})$ that are active on $\bz^{B_k}(\bd^{s_k})$. We let $\widehat{B}^c:=\mathbb{I}_{1:mN}\setminus \widehat{B}$.
  
  We observe that, for any basis $B\in\mathbb{B}_{[\bd,\bd']}$ that satisfies $B\subseteq \widehat{B}$, $\bz^{B_k}(\bd^{s_k})=z^{B}(\bd^{s_k})$ holds due to the uniqueness of basic solutions (in the sense that there exists a unique basic solution {\color{blue} associated} with a specific basis). Accordingly,
  \begin{subequations}\label{eqn:conds}
    \begin{align}
      \bG[\widehat{B},:]\bz &= \bd[\widehat{B}]\\
      \bG[\widehat{B}^c,:]\bz &> \bd[\widehat{B}^c]\label{eqn:conds-ineq}
    \end{align}
  \end{subequations}
  holds for any $\bz=\bz^{B}(\bd^{s_k})$ with such $B$. Since the basic solution mapping $\bz^{B}(\bd^s)$ is affine with respect to $s$ and $B\in\mathbb{B}_{[\bd,\bd']}$ such that satisfies $B\subseteq\widehat{B}$ are finite, one can choose $s\in(s_k,s_{k+1})$ so that \eqref{eqn:conds-ineq} is satisfied for $\bz=\bz^B(\bd^s)$ with any basis $B\subseteq \widehat{B}$. We now define problem $P_{\widehat{B}}(\bd)$ as:
  \begin{subequations}
    \begin{align}
      \min_{\bz}\;& \bp^\top \bz\\
      \st\;&  \bG[\widehat{B},:]\bz \geq \bd[\widehat{B}].\label{eqn:constr-hat}
    \end{align}
  \end{subequations}
  From the KKT conditions of $P(\bd^{s_k})$, one can see that $\bz^{B_k}(\bd^{s_k})$ is a solution of $P_{\widehat{B}}(\bd^{s_k})$. From the dual feasibility of $P_{\widehat{B}}(\bd^s)$ (in the dual problem, only the cost vector changes with respect to the change in $\bd$) and weak duality, there exists a lower bound in the objective value of $P_{\widehat{B}}(\bd^s)$. Furthermore,  from primal feasibility (the feasible set of $P_{\widehat{B}}(\bd^s)$ is a superset of the feasible set of ${P}(\bd^s)$), there exists a solution of $P_{\widehat{B}}(\bd^s)$ \cite[Corollary 2.3]{bertsimas1997introduction}. Moreover, the existence of the basic feasible solution of ${P}(\bd^{s_k})$ and the nonemptyness of the feasible set of $P_{\widehat{B}}(\bd^s)$ imply the existence of the basic feasible solution of $P_{\widehat{B}}(\bd^s)$ \cite[Theorem 2.6]{bertsimas1997introduction}. By the existence of the optimal solution and the existence of the basic feasible solution, there exists a basic optimal solution of $P_{\widehat{B}}(\bd^s)$ \cite[Theorem 2.7]{bertsimas1997introduction}. We now choose the basis associated with such a basic optimal solution as $B_{k+1}$. Since it is a basis for $P_{\widehat{B}}(\cdot)$, we have that $B_{k+1}\subseteq \widehat{B}$. One can see that $\bz^{B_{k+1}}(\bd^s)$ is feasible to $P(\bd^s)$, since \eqref{eqn:constr-hat} is satisfied (by feasibility to $P_{\widehat{B}}(\bd^s)$) and \eqref{eqn:conds-ineq} is satisfied (recall that we chose $s$ such that \eqref{eqn:conds-ineq} is satisfied). Since $\bz^{B_{k+1}}(\bd^s)$ is an optimal solution of $P_{\widehat{B}}(\bd^s)$, which is a relaxed problem of ${P}(\bd^s)$, and it is feasible to ${P}(\bd^s)$, it is an optimal solution of ${P}(\bd^s)$. Therefore, $B_{k+1}$ is an optimal basis of $P(\bd^s)$. This implies that $B_{k+1}\in\mathbb{B}_{k+1}$ (Lemma \ref{lem:sub-1}, condition (b)). Moreover, recall that $B_{k+1}\subseteq \widehat{B}$. Accordingly, $\bz^{B_{k+1}}(\bd^{s_k})=\bz^{B_{k}}(\bd^{s_k})$ holds. 
\end{proof}

\begin{proof}[Proof of Theorem \ref{thm:basic}]
  We choose $\{0=s_0<\cdots<s_{N_d}=1\}$ to be the one that appears in Lemma \ref{lem:sub-1}. We choose some $B_1\in\mathbb{B}_1$ (recall that $\mathbb{B}_1$ is nonempty). Then we repeat the following for $k=2,\cdots,N_d-1$: for given $B_k$, choose $B_{k+1}\in\mathbb{B}_{k+1}$ such that $\bz^{B_k}(\bd^{s_k})=\bz^{B_{k+1}}(\bd^{s_k})$ holds (such a basis exists in $\mathbb{B}_{k+1}$ by Lemma \ref{lem:sub-2}). With this process, one can construct $\{B_1,\cdots,B_{N_d}\}$ in such a way that (a) and (b) hold.
\end{proof}

We draw the following conclusion from Theorem \ref{thm:basic}: for the perturbation path $[\bd,\bd']$, one can divide the path into a finite union of line segments $\{[\bd^{s_{k-1}},\bd^{s_k}]\subseteq[\bd,\bd']\}_{k=1}^K$ in which the optimal basis does not change and the associated basic solutions do not {\it jump} when the basis changes. Thus, the overall solution path is a union of paths where the individual paths follow that of basic solutions. Thus, in order to characterize solution sensitivity, it suffices to characterize the paths of the basic solutions. 

\subsection{Basic Solution Sensitivity}
We now study the sensitivity of the basic solution mappings; in particular, we aim to obtain {\color{blue} stage-wise} bounds of Lipschitz constants for such mappings. 
We define the following for basis $B$ of $P(\cdot)$, matrix $M\in\mathbb{R}^{nN\times nN}$, and vector $v\in\mathbb{R}^{nN}$:
  \begin{align*}
    M_{[i][j]}&:=M[\mathcal{I}_i,\mathcal{I}_j],&& M_{[i]\{j\}}:=M[\mathcal{I}_i,\mathcal{J}_j]\\
    M_{\{i\}[j]}&:=M[\mathcal{J}_i,\mathcal{I}_j],&& M_{\{i\}\{j\}}:=M[\mathcal{J}_i,\mathcal{J}_j]\\
    v_{[i]}&:=v[\mathcal{I}_i],&& v_{\{i\}}:=v[\mathcal{J}_i],
  \end{align*}
  {\color{blue} where $\mathcal{I}_i:=\mathbb{I}_{n(i-1)+1:ni}$, $\mathcal{J}_i:=\{\ell\in\mathbb{I}_{1:nN}:b_\ell\in\mathbb{I}_{m(i-1)+1:mi}\}$, and $B=\{b_1<\cdots<b_{nN}\}$. Note that $\{\mathcal{I}_i\}_{i=1}^N$ and $\{\mathcal{J}_i\}_{i=1}^N$ respectively partition $\mathbb{I}_{1:nN}$.}
  
  \vspace{0.1in}
  The following is the main result of this subsection.
\begin{theorem}\label{thm:key}
  Consider the data vectors $\bd=(d_1,\cdots,d_N)$ and $\bd'=(d'_1,\cdots,d'_N)\in\mathbb{D}$. The following holds for any basis $B\in\mathbb{B}$, the associated basic solution mapping $\bz^B(\cdot)=(z^B_1(\cdot),\cdots,z^B_N(\cdot))$, and $i\in\mathbb{I}_{1:N}$:
    \begin{align*}
    \Vert z^B_i(\bd)-z^B_i(\bd')\Vert\leq\sum_{j=1}^N  \Gamma_B\rho_B^{(|i-j|-1)_+}\Vert  d_j- d'_j\Vert.
  \end{align*}
    where $\Gamma_B:=\displaystyle\frac{\overline{\sigma}_B}{\underline{\sigma}_B^2}$, $\displaystyle\rho_B := \frac{\overline{\sigma}_B^2-\underline{\sigma}_B^2}{\overline{\sigma}_B^2+\underline{\sigma}_B^2}$, $\underline{\sigma}_B:=\sigma_{\min}(\bG[B,:])$, and $\overline{\sigma}_B:=\sigma_{\max}(\bG[B,:])$.
\end{theorem}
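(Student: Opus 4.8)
The plan is to reduce the theorem to an off-diagonal decay estimate for the blocks of the basis inverse. Write $H := \bG[B,:]$, which is nonsingular by definition of a basis, so that the basic solution $\bz^B(\bd) = H^{-1}\bd[B]$ is linear in the data and $\bz^B(\bd)-\bz^B(\bd') = H^{-1}(\bd[B]-\bd'[B])$. Extracting the $i$-th stage and using the block notation gives $z^B_i(\bd)-z^B_i(\bd') = \sum_{j=1}^N (H^{-1})_{[i]\{j\}}\,(\bd[B]-\bd'[B])_{\{j\}}$. Two structural observations then set up the whole argument. First, by the definition of $\mathcal{J}_j$, the subvector $(\bd[B])_{\{j\}}$ consists of entries of $d_j$ only, so $\Vert(\bd[B]-\bd'[B])_{\{j\}}\Vert\le\Vert d_j-d'_j\Vert$. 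Second, the banded pattern of $\bG$ (the block is nonzero only when $0\le i-j\le1$) forces $H$ to be block lower-bidiagonal, i.e. $H_{\{i\}[j]}=0$ unless $j\in\{i-1,i\}$. It therefore suffices to prove the block bound $\Vert(H^{-1})_{[i]\{j\}}\Vert\le\Gamma_B\,\rho_B^{(|i-j|-1)_+}$; substituting it and summing yields the stated inequality.

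To control $H^{-1}$ I would symmetrize. The matrix $W:=H^\top H$ is symmetric positive definite with spectrum contained in $[\underline{\sigma}_B^2,\overline{\sigma}_B^2]$, and, because $H$ is block lower-bidiagonal, $W$ is block tridiagonal: $W_{[i][\ell]}=0$ for $|i-\ell|>1$. Writing $W=c(I-R)$ with $c:=(\overline{\sigma}_B^2+\underline{\sigma}_B^2)/2$ and $R:=I-W/c$, one checks directly from the spectral bounds that $\Vert R\Vert=\rho_B<1$, and $R$ inherits the block-tridiagonal pattern of $W$. The Neumann series $W^{-1}=c^{-1}\sum_{t\ge0}R^t$ thus converges, and since a product of $t$ block-tridiagonal matrices has block-bandwidth at most $t$, we have $(R^t)_{[i][\ell]}=0$ whenever $|i-\ell|>t$.

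Using $H^{-1}=W^{-1}H^\top$, I would write $(H^{-1})_{[i]\{j\}}=c^{-1}\sum_{t\ge0}(R^tH^\top)_{[i]\{j\}}$ and bound each summand in two ways. For its size, $(R^tH^\top)_{[i]\{j\}}$ is a submatrix of $R^tH^\top$, so $\Vert(R^tH^\top)_{[i]\{j\}}\Vert\le\Vert R\Vert^t\Vert H^\top\Vert=\overline{\sigma}_B\,\rho_B^t$. For its support, the blocks $(H^\top)_{[\ell]\{j\}}$ vanish unless $\ell\in\{j-1,j\}$ (again by lower-bidiagonality), so the product $(R^tH^\top)_{[i]\{j\}}$ is zero unless $t\ge t_0:=\min_{\ell\in\{j-1,j\}}|i-\ell|$. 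A short case check gives $t_0=|i-j|$ when $i\ge j$ and $t_0=|i-j|-1$ when $i<j$, so in both cases $t_0\ge(|i-j|-1)_+$. Summing the geometric tail from $t_0$ gives $\Vert(H^{-1})_{[i]\{j\}}\Vert\le c^{-1}\overline{\sigma}_B\,\rho_B^{t_0}/(1-\rho_B)$, and the identity $c^{-1}/(1-\rho_B)=\underline{\sigma}_B^{-2}$ collapses the prefactor to exactly $\Gamma_B=\overline{\sigma}_B/\underline{\sigma}_B^2$; combined with $\rho_B^{t_0}\le\rho_B^{(|i-j|-1)_+}$ this is the required block bound.

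I expect the delicate point to be this last step, where the exact constant $\Gamma_B$ (rather than a factor such as $\Gamma_B(1+\rho_B)$) must be recovered. The temptation is to estimate $W^{-1}$ and $H^\top$ separately; doing so produces two geometric contributions from the two nonzero blocks of $H^\top$ and an extra factor of order $1+\rho_B$ that would spoil the clean constant. The remedy is to keep $H^\top$ attached to the Neumann series and to use the one-sided (lower-bidiagonal) coupling, so that the support of $(H^\top)_{[\ell]\{j\}}$ shaves exactly one power of $\rho_B$ off the backward direction $i<j$ while the forward direction $i\ge j$ is already dominated by the target exponent. The remaining ingredients --- the subvector norm inequality, the submatrix norm bound, and the spectral estimates on $W$ --- are routine.
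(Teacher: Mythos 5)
Your proof is correct and takes essentially the same route as the paper's: the paper symmetrizes with $\bH_B = \bG[B,:]\bG[B,:]^\top$, writes $\bG[B,:]^{-1} = \bG[B,:]^\top \bH_B^{-1}$, expands $\bH_B^{-1}$ in the same scaled Neumann series, proves the same bandwidth-growth lemma for powers of the banded symmetric factor, and sums the geometric tail from the same offset index to land on exactly $\Gamma_B\rho_B^{(|i-j|-1)_+}$. Your variant using $W=H^\top H$ and $H^{-1}=W^{-1}H^\top$ is just the mirror image of that argument (Gram matrix on the variable side rather than the constraint side), including the key trick you flag of keeping $H^\top$ attached to the series so that the constant collapses to $\overline{\sigma}_B/\underline{\sigma}_B^2$.
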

To prove Theorem \ref{thm:key}, we first prove the following.
\begin{lemma}\label{lem:claim}
  Let $B$ be a basis of $P(\cdot)$ and $\bH_B:=\bG[B,:]\bG[B,:]^\top$. If $|i-j|>k\in\mathbb{I}_{>0}$, the following holds:
  \begin{align}\label{eqn:claim}
    (\bH_B^k)_{\{i\}\{j\}}=0.
  \end{align}
\end{lemma}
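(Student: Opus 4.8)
The plan is to prove the claim by induction on $k$, exploiting the fact that $\bG[B,:]$ inherits the block-bidiagonal sparsity of $\bG$. The first step is to record this inherited structure precisely. Writing $\widehat{\bG}:=\bG[B,:]$, I would observe that for each time stage $i$ the row indices $\mathcal{J}_i$ are exactly those basis rows drawn from the constraint block $\mathbb{I}_{m(i-1)+1:mi}$. Since the paper has already noted that $\bG[\mathbb{I}_{m(i-1)+1:mi},\mathbb{I}_{n(l-1)+1:nl}]\neq 0$ only when $0\leq i-l\leq 1$, it follows immediately that the block $\widehat{\bG}_{\{i\}[l]}=\widehat{\bG}[\mathcal{J}_i,\mathcal{I}_l]$ vanishes unless $l\in\{i-1,i\}$. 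This single structural fact drives the whole argument.

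The base case $k=1$ then amounts to showing $(\bH_B)_{\{i\}\{j\}}=0$ whenever $|i-j|>1$. Because $\{\mathcal{I}_l\}_{l=1}^N$ partitions the column index set of $\widehat{\bG}$, block multiplication yields $(\bH_B)_{\{i\}\{j\}}=\widehat{\bG}[\mathcal{J}_i,:]\,\widehat{\bG}[\mathcal{J}_j,:]^\top=\sum_{l=1}^N \widehat{\bG}_{\{i\}[l]}\,\widehat{\bG}_{\{j\}[l]}^\top$. A summand can be nonzero only if both factors are nonzero, i.e. only if $l\in\{i-1,i\}\cap\{j-1,j\}$, and this intersection is empty precisely when $|i-j|>1$. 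Hence every summand vanishes, establishing the base case.

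For the inductive step I would assume $(\bH_B^k)_{\{i\}\{j\}}=0$ whenever $|i-j|>k$ and consider the factorization $\bH_B^{k+1}=\bH_B^k\bH_B$. Since $\{\mathcal{J}_l\}_{l=1}^N$ partitions the common index set shared by the two factors, block multiplication gives $(\bH_B^{k+1})_{\{i\}\{j\}}=\sum_{l=1}^N (\bH_B^k)_{\{i\}\{l\}}\,(\bH_B)_{\{l\}\{j\}}$. By the inductive hypothesis the first factor is nonzero only if $|i-l|\leq k$, and by the base case the second is nonzero only if $|l-j|\leq 1$; the triangle inequality then forces $|i-j|\leq k+1$ for any nonzero summand. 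Consequently, if $|i-j|>k+1$ every summand vanishes, which completes the induction and hence the proof.

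Conceptually this is the standard fact that bandedness is preserved under matrix powers with the bandwidth growing by one per multiplication, so no single step is genuinely difficult. The part requiring the most care is the bookkeeping: one must keep straight that $\bH_B$ is indexed on both sides by the basis-row partition $\{\mathcal{J}_i\}$, whereas $\widehat{\bG}$ mixes the column partition $\{\mathcal{I}_l\}$ with $\{\mathcal{J}_i\}$, and one must check that the two block-multiplication decompositions (over $\{\mathcal{I}_l\}$ in the base case, over $\{\mathcal{J}_l\}$ in the inductive step) are legitimate given these possibly uneven blocks. Provided the inherited sparsity of $\widehat{\bG}$ is stated cleanly at the outset, the remainder is routine.
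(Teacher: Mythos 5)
Your proposal is correct and follows essentially the same route as the paper's proof: the same observation that $\bG[B,:]_{\{i\}[\ell]}$ is a submatrix of $G_{i,\ell}$ and hence vanishes unless $0\leq i-\ell\leq 1$, the same base case via block multiplication over $\{\mathcal{I}_\ell\}_{\ell=1}^N$, and the same inductive step via block multiplication over $\{\mathcal{J}_\ell\}_{\ell=1}^N$ with the triangle inequality. Your added care in justifying the legitimacy of the two block decompositions over the (possibly uneven) partitions is a welcome refinement of bookkeeping, not a different argument.
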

\begin{proof}
  We proceed by induction: first we note that $\bG[B,:]_{\{i\}[j]}$ is a submatrix of $G_{i,j}$. Thus, $\bG[B,:]_{\{i\}[j]}\neq 0$ only if $0\leq i-j\leq 1$. From this observation, we have that if $|i-j|>1$, $(\bH_B)_{\{i\}\{j\}}=\sum_{\ell=1}^N \bG[B,:]_{\{i\}[\ell]} (\bG[B,:]_{\{j\}[\ell]})^\top = 0$, since either $0\leq i-\ell \leq 1$ or $0\leq j-\ell\leq 1$ are violated. Thus, \eqref{eqn:claim} holds for $k=1$. Suppose that \eqref{eqn:claim} holds for $k=k'$. We have that if $|i-j|>k'+1$, for any $\ell\in\mathbb{I}_{1:N}$, either $|i-\ell|>k'$ or $|j-\ell|>1$ holds. Accordingly, if $|i-j|>k'+1$, $(\bH_B^{k'+1})_{\{i\}\{j\}} = \sum_{\ell=1}^N (\bH_B^{k'})_{\{i\}\{\ell\}}(\bH_B)_{\{\ell\}\{j\}} = 0$.
\end{proof}
Lemma \ref{lem:claim} states that the block-banded structure of $\bH_B$ is preserved if we take powers of $\bH_B$. We are now ready to prove Theorem \ref{thm:key}.

\begin{proof}[Proof of Theorem \ref{thm:key}]
  From $\underline{\sigma}_B^2 I \preceq \bH_B\preceq  \overline{\sigma}_B^2I$, we have:
 \begin{align}\label{eqn:factor}
   \frac{\underline{\sigma}_B^2-\overline{\sigma}_B^2}{\underline{\sigma}_B^2+\overline{\sigma}_B^2} I\preceq I-\frac{2}{\underline{\sigma}_B^2+\overline{\sigma}_B^2}\bH_B
   \preceq \frac{-\underline{\sigma}_B^2+\overline{\sigma}_B^2}{\underline{\sigma}_B^2+\overline{\sigma}_B^2} .
  \end{align}
 Moreover, we have that:
 \begin{align*}
   \bH_B^{-1} &= \frac{2}{\underline{\sigma}_B^2+\overline{\sigma}_B^2} \left(\frac{2}{\underline{\sigma}_B^2+\overline{\sigma}_B^2}\bH_B \right)^{-1}\\
   &= \frac{2}{\underline{\sigma}_B^2+\overline{\sigma}_B^2} \left(I-(I-\frac{2}{\underline{\sigma}_B^2+\overline{\sigma}_B^2}\bH_B )\right)^{-1}\\
   &= \frac{2}{\underline{\sigma}_B^2+\overline{\sigma}_B^2} \sum_{k=0}^\infty \left(I-\frac{2}{\underline{\sigma}_B^2+\overline{\sigma}_B^2}\bH_B \right)^k,
 \end{align*}
 where the last equality follows from \cite[Theorem 5.6.9 and Corollay 5.6.16]{horn2012matrix} and \eqref{eqn:factor}. From $\bG[B,:]^{-1}=\bG[B,:]^\top \bH_B^{-1}$ we have:
  \begin{align}\label{eqn:key-1}
    \bG[B,:]^{-1}
    &= \frac{2}{\underline{\sigma}_B^2+\overline{\sigma}_B^2} \sum_{k=0}^\infty \bG[B,:]^\top \left(I-\frac{2}{\underline{\sigma}_B^2+\overline{\sigma}_B^2}\bH_B \right)^k.
  \end{align}
  We have that, if $|i-j|>k+1$, then:
  \begin{align}\label{eqn:sparsity}
    &\left(\bG[B,:]^\top\left(I-\frac{2}{\underline{\sigma}_B^2+\overline{\sigma}_B^2}\bH_B \right)^k\right)_{[i]\{j\}}\\\nonumber
    &=\sum_{\ell=1}^N \left(\left(\bG[B,:]\right)_{\{\ell\}[i]}\right)^\top \left(I-\frac{2}{\underline{\sigma}_B^2+\overline{\sigma}_B^2}\bH_B \right)^k_{\{\ell\}\{j\}}=0,
  \end{align}
  since either $\bG[B,:]_{\{\ell\}[i]}=0$ or $(I-\frac{2}{\underline{\sigma}_B^2+\overline{\sigma}_B^2}\bH_B )^k_{\{\ell\}\{j\}}=0$ by sparsity of $\bG[B,:]$ and Lemma \ref{lem:claim}.
  By extracting submatrices from \eqref{eqn:key-1}, one establishes the following.
  \begin{align*}
    &(\bG[B,:]^{-1})_{[i]\{j\}}\nonumber \\
    &= \frac{2}{\underline{\sigma}_B^2+\overline{\sigma}_B^2} \sum_{k=k_0}^\infty \left(\bG[B,:]^\top\left(I-\frac{2\bH_B}{\underline{\sigma}_B^2+\overline{\sigma}_B^2} \right)^k\right)_{[i]\{j\}}
  \end{align*}
  where $k_0:=\max(|i-j|-1,0)$ and the summation over $k=0,\cdots,k_0-1$ is neglected by \eqref{eqn:sparsity}. Using the triangle inequality and the fact that the matrix norm of a submatrix is always less than that of the original matrix, we have
  \begin{align}\label{eqn:inv}
    &\Vert (\bG[B,:]^{-1})_{[i]\{j\}} \Vert  \\
    &\leq \frac{2}{\underline{\sigma}_B^2+\overline{\sigma}_B^2} \sum_{k=k_0}^\infty \left\Vert \bG[B,:]^\top\left(I-\frac{2}{\underline{\sigma}_B^2+\overline{\sigma}_B^2}\bH_B\right)^k\right\Vert\nonumber\\
    &\leq \frac{2}{\underline{\sigma}_B^2+\overline{\sigma}_B^2} \sum_{k=k_0}^\infty \left\Vert \bG[B,:]\right\Vert\left\Vert I-\frac{2}{\underline{\sigma}_B^2+\overline{\sigma}_B^2} \bH_B\right\Vert^k\nonumber\\
    &\leq \frac{2}{\underline{\sigma}_B^2+\overline{\sigma}_B^2} \sum_{k=k_0}^\infty\overline{\sigma}_B\left(\frac{\overline{\sigma}_B^2-\underline{\sigma}_B^2}{\overline{\sigma}_B^2+\underline{\sigma}_B^2}\right)^k\nonumber\\
    &\leq \underbrace{\frac{\overline{\sigma}_B}{\underline{\sigma}_B^2}}_{\Gamma_B}\Bigg(\underbrace{\frac{\overline{\sigma}_B^2-\underline{\sigma}_B^2}{\overline{\sigma}_B^2+\underline{\sigma}_B^2}}_{\rho_B}\Bigg)^{(|i-j|-1)_+} \nonumber\\
    &\leq \Gamma_B \rho_B^{(|i-j|-1)_+}.\nonumber
  \end{align}
  where the second inequality follows from the submultiplicativity and transpose invariance of the induced 2-norm, the third inequality follows from \eqref{eqn:factor} and the fact that the induced 2-norm of symmetric matrix is equal to the largest magnitude eigenvalue, and the fourth inequality follows from the summation of geometric series, and the last inequality is obtained by simplification. 

  From \eqref{eqn:bsol} and the block multiplication formula, we have:
  \begin{align*}
    z^B_{i}( \bd)-z^B_{i}( \bd') &= \sum_{j=1}^N\left(\bG[B,:]^{-1}\right)_{[i]\{j\}}(\bd_{\{j\}}-\bd'_{\{j\}}).
  \end{align*}
  From the triangle inequality and the submultiplicativity of matrix norms, we have that:
  \begin{align*}
    \left\Vert z^B_{i}( \bd)-z^B_{i}( \bd')\right\Vert &\leq \sum_{j=1}^N\left\Vert\left(\bG[B,:]^{-1}\right)_{[i]\{j\}}\right\Vert\left\Vert \bd_{\{j\}}-\bd'_{\{j\}}\right\Vert\nonumber\\
    &\leq \Gamma_B \rho_B^{(|i-j|-1)_+}\left\Vert d_j-d'_j\right\Vert,
  \end{align*}
  where the second inequality follows from \eqref{eqn:inv} and the observation that $\bd_{\{j\}}$ is a subvector of $d_j$.
\end{proof}

Theorem \ref{thm:key} establishes that the sensitivity of the basic solution at a given time location against a data perturbation at another location decays exponentially with respect to the distance between such locations. Note that the coefficient $\Gamma_B$ and $\rho_B$ are obtained as functions of $\underline{\sigma}_B$ and $\overline{\sigma}_B$, which are the quantities associated with matrix $\bG[B,:]$.

\subsection{Exponential Decay of Sensitivity}
We are now ready to state our main result of this section, which we call exponential decay of sensitivity. Here, we connect Theorems \ref{thm:basic} and \ref{thm:key} to obtain a sensitivity bound for the optimal solutions.

\begin{theorem}[Exponential Decay of Sensitivity, EDS]\label{thm:eds}
  Consider $\bd=(d_1,\cdots,d_N),\bd'=(d'_1,\cdots,d'_N)\in \mathbb{D}$. There exists a solution $\bz^*(\bd)=(z^*_1(\bd),\cdots,z^*_N(\bd))$ of $P(\bd)$ and a solution $\bz^*(\bd')=(z^*_1(\bd'),\cdots,z^*_N(\bd'))$ of $P(\bd')$ such that the following holds for $i\in\mathbb{I}_{1:N}$:
  \begin{align}\label{eqn:eds}
    \Vert z^*_{i}( \bd)-z^*_{i}( \bd')\Vert\leq \sum_{j=1}^N \Gamma\rho^{(|i-j|-1)_+}\Vert d_j- d'_j\Vert,
  \end{align}
  where $\Gamma:=\displaystyle\frac{\overline{\sigma}}{\underline{\sigma}^2} $, $\displaystyle\rho := \frac{\overline{\sigma}^2-\underline{\sigma}^2}{\overline{\sigma}^2+\underline{\sigma}^2}$, $\displaystyle\underline{\sigma}:=\min_{B\in\mathbb{B}}\underline{\sigma}_B$, and $\displaystyle\overline{\sigma}:=\max_{B\in\mathbb{B}}\overline{\sigma}_B$.
\end{theorem}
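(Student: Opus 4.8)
The plan is to stitch together the two preceding results: Theorem \ref{thm:basic} decomposes the segment $[\bd,\bd']$ into finitely many pieces on which a single optimal basis governs the solution and across whose endpoints the associated basic solutions agree, while Theorem \ref{thm:key} controls the stage-wise variation of each individual basic-solution mapping. First I would invoke Theorem \ref{thm:basic} to obtain the breakpoints $\{0=s_0<\cdots<s_{N_d}=1\}$ and the bases $\{B_1,\ldots,B_{N_d}\}\subseteq\mathbb{B}$, and then \emph{define} the two endpoint solutions by $\bz^*(\bd):=\bz^{B_1}(\bd^{s_0})$ and $\bz^*(\bd'):=\bz^{B_{N_d}}(\bd^{s_{N_d}})$; since $\bd^{s_0}=\bd$ and $\bd^{s_{N_d}}=\bd'$, these are genuine optimal solutions of $P(\bd)$ and $P(\bd')$ by part (a) of that theorem.

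The core of the argument is a telescoping identity. For a fixed stage $i$ I would write
\begin{align*}
z^*_i(\bd')-z^*_i(\bd)=\sum_{k=1}^{N_d}\left(z^{B_k}_i(\bd^{s_k})-z^{B_k}_i(\bd^{s_{k-1}})\right),
\end{align*}
which collapses to $z^{B_{N_d}}_i(\bd^{s_{N_d}})-z^{B_1}_i(\bd^{s_0})$ precisely because the continuity-across-breakpoints property, $\bz^{B_k}(\bd^{s_k})=\bz^{B_{k+1}}(\bd^{s_k})$ (Theorem \ref{thm:basic}(b)), cancels each interior term. Taking norms and applying the triangle inequality then reduces the problem to bounding each increment $\Vert z^{B_k}_i(\bd^{s_k})-z^{B_k}_i(\bd^{s_{k-1}})\Vert$ over a segment where the single basis $B_k$ is active, which is exactly the situation handled by Theorem \ref{thm:key}.

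Next I would apply Theorem \ref{thm:key} to each increment with data points $\bd^{s_{k-1}}$ and $\bd^{s_k}$, and then replace the basis-dependent constants by uniform ones. The key observations are that $\mathbb{B}$ is finite and every $\bG[B,:]$ is nonsingular, so $\underline{\sigma}=\min_{B}\underline{\sigma}_B>0$ and $\overline{\sigma}=\max_{B}\overline{\sigma}_B$ are attained; consequently $\Gamma_{B_k}\leq\Gamma$, and since $\rho_B=1-2/\left(\overline{\sigma}_B^2/\underline{\sigma}_B^2+1\right)$ is monotone in the conditioning ratio $\overline{\sigma}_B/\underline{\sigma}_B$, we also get $0\leq\rho_{B_k}\leq\rho<1$, so that $\rho_{B_k}^{(|i-j|-1)_+}\leq\rho^{(|i-j|-1)_+}$. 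Finally, using $\bd^s=(1-s)\bd+s\bd'$ I would substitute $\Vert d^{s_k}_j-d^{s_{k-1}}_j\Vert=(s_k-s_{k-1})\Vert d_j-d'_j\Vert$, exchange the sums over $k$ and $j$, and invoke $\sum_{k=1}^{N_d}(s_k-s_{k-1})=1$ to obtain exactly the claimed bound \eqref{eqn:eds}.

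The step I expect to be most delicate is ensuring the telescoping is set up consistently: the endpoint solutions must be chosen along the \emph{same} chain of bases produced by Theorem \ref{thm:basic}, so that the no-jump identity at each $s_k$ matches the pair $(B_k,B_{k+1})$ appearing in the sum. The remainder is a routine uniformization of constants---verifying the monotonicity of $\rho_B$ in the condition number and that the step sizes sum to one---together with the affine dependence of $\bd^s$ on $s$.
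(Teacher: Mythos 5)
Your proof is correct and takes essentially the same route as the paper's: the same telescoping decomposition of $z^*_i(\bd)-z^*_i(\bd')$ into per-basis increments via Theorem \ref{thm:basic}(b), the same application of Theorem \ref{thm:key} on each segment with $\Vert d^{s_k}_j-d^{s_{k-1}}_j\Vert=(s_k-s_{k-1})\Vert d_j-d'_j\Vert$, and the same uniformization $\Gamma_{B_k}\leq\Gamma$, $\rho_{B_k}\leq\rho$ followed by $\sum_{k}(s_k-s_{k-1})=1$. The only difference is cosmetic: you make explicit that the endpoint solutions are the basic solutions $\bz^{B_1}(\bd^{s_0})$ and $\bz^{B_{N_d}}(\bd^{s_{N_d}})$, which the paper leaves implicit.
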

\begin{proof}
  {\color{blue} Under Theorem \ref{thm:basic}, there exists a solution $\bz^*(\bd)$ of $P(\bd)$, a solution $\bz^*(\bd')$ of $P(\bd')$, a sequence $\{s_0=0<\cdots<s_{N_d}=1\}$, and a sequence of bases $\{B_1,\cdots,B_{N_d}\in\mathbb{B}_{[\bd,\bd']}\}$ such that}:
  \begin{align}\label{eqn:eds-1}
    z^*_i(\bd)- z^*_i(\bd') = \sum_{k=1}^{N_d} z^{B_k}_i(\bd^{s_{k-1}})-z^{B_k}_i(\bd^{s_k}).
  \end{align}
  By applying the triangle inequality we have that
  \begin{align*}
    &\Vert z^*_i(\bd)- z^*_i(\bd')\Vert 
    \leq \sum_{k=1}^{N_d} \Vert z^{B_k}_i(\bd^{s_{k-1}})-z^{B_k}_i(\bd^{s_k})\Vert\\\nonumber
    &\quad\leq \sum_{k=1}^{N_d} \sum_{j=1}^N (s_k-s_{k-1})\Gamma_{B_k}\rho_{B_k}^{(|i-j|-1)_+}\Vert d_j-d'_j\Vert,
  \end{align*}
  where the second inequality follows from Theorem \ref{thm:key}. Here, observe that $\Gamma_{B_k}\leq \Gamma$ and $\rho_{B_k}^{(|i-j|-1)_+} \leq \rho^{(|i-j|-1)_+}$ hold. In consequence,
  \begin{align*}
    &\Vert z^*_i(\bd)- z^*_i(\bd')\Vert \\
    &\quad\leq \sum_{j=1}^N \Gamma\rho^{(|i-j|-1)_+} \Vert d_j-d'_j\Vert\sum_{k=1}^{N_d}(s_k-s_{k-1})\\
    &\quad\leq \sum_{j=1}^N \Gamma\rho^{(|i-j|-1)_+} \Vert d_j-d_j'\Vert.\qedhere
  \end{align*}
\end{proof}
The coefficients $\Gamma \rho^{(|i-j|-1)_+}$ are {\color{blue} stage-wise} Lipschitz constants that represent the sensitivity of the solution $z_i^*(\bd)$ (at stage $i$) with respect to the perturbation on $d_j$ (at stage $j$). Since $\rho\in(0,1)$, we see that the solution sensitivity {\it decays} exponentially as the distance $|i-j|$ between time stages $i$ and $j$ increases.
\\

  The sensitivity decays at a substantial rate if $\rho$ is sufficiently small. In this context, we observe that $\rho$ can be made sufficiently small if there exist a sufficiently large lower bound in $\sigma_{\min}(\bG[B,:])$ and a sufficiently small upper bound in $\sigma_{\max}(\bG[B,:])$. Unfortunately, in many applications, the condition number of $\bG[B,:]$ tends to increase as the prediction horizon $N$ grows. Thus, in such a case, the decay rate $\rho$ deteriorates as the size of the problem grows. In order for the results in Theorem \ref{thm:eds} to be useful, there should exist uniform lower and upper bounds (which do not depend on the prediction horizon length $N$) in the singular values of $\bG[B,:]$. The remaining question is when do such uniform bounds exist. Existing works for linear quadratic settings suggest that controllability-related assumption is necessary to guarantee the existence of such uniform bounds \cite{xu2018exponentially,na2020exponential}. We leave this issue as a topic of future work.

% ------------------------------------------------
\section{Algebraic Coarsening}\label{sec:crs}

In this section, we use the exponential decay of sensitivity to derive an algebraic coarsening technique for Problem \eqref{prob:lp-detail}.  Specifically, we are interested in deriving a coarsening strategy that is capable of reducing the dimension of the problem without sacrificing much solution quality. We call such a reduced-dimension problem as the {\it coarse problem} and its solution as the {\it coarse solution}. A key observation that arises in receding horizon control is that one does not need to maintain the overall quality of the solution, but one only needs to maintain the quality of the solution at the first time step (the control action implemented in the actual system). Thus, we design a coarsening strategy in a way that prioritizes the quality of $z^*_1$ over that of the remaining trajectory $\{z^*_i\}_{i=2}^{N}$. This is done by creating a grid that becomes exponentially more sparse as one marches forward in time.  In order to conduct this analysis, we will prove that a coarsening scheme can be cast as a parametric perturbation.

Move-blocking is a conventional method for dimensionality reduction of MPC problems \cite{cagienard2007move,gondhalekar2010least}. In this strategy, one {\it blocks} control variables (i.e., one forces a block of control variables to move together) to reduce the number of degrees of freedom. Here blocking can be regarded as a special case of a coarsening strategy that only aggregates control variables over time subdomains. While the reduction of degrees of freedom is highly desired in certain settings (e.g., in explicit MPC \cite{borrelli2017predictive}), one often sees better scalability by reducing the total number of variables and constraints. In the proposed approach, we {\it aggregate} the variables and constraints (equivalently, aggregate primal and dual variables) to reduce their numbers. This approach is known as algebraic coarsening; algebraic coarsening strategies were introduced to optimization problems in \cite{shin2018multi}, where the authors used the strategy to design a hierarchical architecture to solve large optimization problems. Here we present a generalized version of the algebraic coarsening scheme that incorporates prior information of the primal-dual solution. A schematic illustration of the algebraic coarsening approach is shown in Fig. \ref{fig:coarsening}.

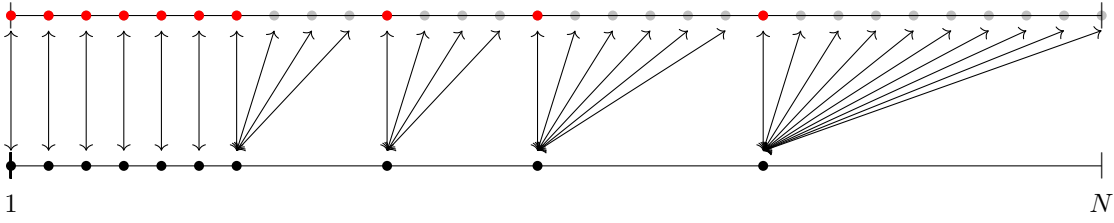
\begin{figure*}[t]
  \centering
  \begin{tikzpicture}
    % original grid
    \foreach \x in {1,...,30}
    \node[circle,fill,lightgray,scale=.4]  at ((\x/2-1/2,0) {};
    \node at (0,0) {$|$};
    \node at (14.5,0) {$|$};
    \draw (0,0)--(14.5,0);
    \foreach \x in {1,2,3,4,5,6,7,11,15,21}
    \node[circle,fill,red,scale=.4]  at ((\x/2-1/2,0) {};

    % coarse grid
    \foreach \x in {1,...,30}
    \node at (0,-2) {$|$};
    \node at (14.5,-2) {$|$};
    \draw (0,-2)--(14.5,-2);
    \foreach \x in {1,2,3,4,5,6,7,11,15,21}
    \node[circle,fill,scale=.4]  at ((\x/2-1/2,-2) {};

    \foreach \x in {1,2,3,4,5,6,7,11,15,21}
    \draw[<->] (\x/2-1/2,-.2)--(\x/2-1/2,-1.8);
    \foreach \x in {8,12,16,22}
    \draw[<->] (\x/2-1/2,-.2)--(\x/2-2/2,-1.8);
    \foreach \x in {9,13,17,23}
    \draw[<->] (\x/2-1/2,-.2)--(\x/2-3/2,-1.8);
    \foreach \x in {10,14,18,24}
    \draw[<->] (\x/2-1/2,-.2)--(\x/2-4/2,-1.8);
    \foreach \x in {19,25}
    \draw[<->] (\x/2-1/2,-.2)--(\x/2-5/2,-1.8);    
    \foreach \x in {20,26}
    \draw[<->] (\x/2-1/2,-.2)--(\x/2-6/2,-1.8);
    \draw[<->] (27/2-1/2,-.2)--(27/2-7/2,-1.8);
    \draw[<->] (28/2-1/2,-.2)--(28/2-8/2,-1.8);
    \draw[<->] (29/2-1/2,-.2)--(29/2-9/2,-1.8);
    \draw[<->] (30/2-1/2,-.2)--(30/2-10/2,-1.8); 
    \node at (0,-2.5) {$1$};
    \node at (14.5,-2.5) {$N$};
  \end{tikzpicture}
  \caption{Illustration of algebraic coarsening strategy (diffusing-horizon).}\label{fig:coarsening}
\end{figure*}

\subsection{Coarse Problem Formulation}

Consider a {\it coarse grid} $\{M_1=1<\cdots<M_K\}\subseteq \mathbb{I}_{1:N}$. The coarse grid uniquely defines a partition $\{\mathbb{I}_{M_k:N_k}\}_{k=1}^K$ of the time grid $\mathbb{I}_{1:N}$, where $N_k=M_{k+1}-1$ (we let $M_{K+1}:=N+1$ for convenience). We define $L_k:=|\mathbb{I}_{M_k:N_k}|$. Using this coarse grid, we define the {\it coarsening operators}.
\begin{definition}[Coarsening operators]\label{def:crs}
  The operators $T\in\mathbb{R}^{nN\times nK}$ and $U\in\mathbb{R}^{mN\times mK}$
  %% , and $V\in\mathbb{R}^{(m+2n)L\times (m+2n)K}$
  associated with the coarse grid $\{M_k\}_{k=1}^K$ are defined as
  \begin{align*}
    T:=\diag(T_1,T_2,\cdots,T_K),\; U:=\diag(U_1,U_2,\cdots,U_K)
  \end{align*}
  where
  \begin{align*}
    T_k := L_k^{-1/2}\begin{bmatrix}
      {I}_{n}\\\vdots\\{I}_{n}
    \end{bmatrix}_{nL_k \times n},\;
    U_k := L_k^{-1/2}\begin{bmatrix}
      {I}_{m}\\\vdots\\{I}_{m}
    \end{bmatrix}_{mL_k \times m}.
  \end{align*}
\end{definition}
Note that the coarsening operators can be fully determined from the choice of coarse grid $\{M_k\}_{k=1}^K$. Thus, the design of coarsening strategy reduces to {\it designing a partitioning scheme} for the time grid $\mathbb{I}_{1:N}$. We note that the operators $T$, $U$, $T_k$, and $U_k$ are orthogonal and that  $T,U,T_k,U_k\geq 0$.

Now we discuss the coarse problem formulation; we start by rewriting $P(\bd;\bG,\bp)$ as:
\begin{subequations}\label{prob:lp-pre-coarse}
\begin{align}
  &\min_{\{\bz_k\}_{k=1}^K}\;\sum_{k=1}^N \bp_k^\top \bz_k\\
  \quad&\st\;\bG_{k,k-1}\bz_{k-1}+\bG_{k,k} \bz_{k} \geq \bd_k,\;k\in\mathbb{I}_{1:K}\label{prob:lp-pre-coarse-con}
\end{align}
\end{subequations}
where we use the syntax:
\begin{align*}
  \boldsymbol{M}_{k,\ell} &:= \{M_{i,j}\}_{i\in \mathbb{I}_{M_k:N_k},j\in \mathbb{I}_{M_\ell:N_\ell}},\;
  \boldsymbol{v}_k &:= \{v_i\}_{i\in \mathbb{I}_{M_k:N_k}},
\end{align*}
and assume $\bG_{1,0}=0$ and $\bz_0=0$ for convenience. {\color{blue} We denote dual variables for \eqref{prob:lp-pre-coarse-con} by $\blambda_k$.} Here, note that $\bz_k\neq z_k$, $\bG_{k,k}\neq G_{k,k}$, and so on. Now we will apply coarsening to \eqref{prob:lp-pre-coarse} (or equivalently, $P(\bd;\bG,\bp)$) to formulate the coarse problem. The coarse problem with grid $\{M_k\}_{k=1}^K$ is formulated as:
\begin{subequations}\label{prob:lp-coarse-1}
  \begin{align}
    \min_{\{\widetilde{z}_k\}_{k=1}^K}\;&\sum_{k=1}^K\widetilde{p}_k^\top \widetilde{z}_k\\
    \st\;& \widetilde{G}_{k,k-1} \widetilde{z}_{k-1}+\widetilde{G}_{k,k} \widetilde{z}_k \geq \widetilde{d}_k, \label{eqn:lp-coarse-1-con}
  \end{align}
\end{subequations}
where:
\begin{align*}
  \widetilde{p}_k&=T_k^\top \left(\boldsymbol{p}_k - \boldsymbol{G}^\top_{k,k} \boldsymbol{\lambda}^o_k-\boldsymbol{G}^\top_{k+1,k} \boldsymbol{\lambda}^o_{k+1}\right)\\
  \widetilde{d}_k&=U^\top_k \left(\boldsymbol{d}_k-\boldsymbol{G}_{k,k-1}\boldsymbol{z}^o_{k-1}-\boldsymbol{G}_{k,k}\boldsymbol{z}^o_{k}\right);
\end{align*}
$\widetilde{G}_{k,k-1}=U_k^\top \boldsymbol{G}_{k,k-1} T_{k-1}$; $\widetilde{G}_{k,k}=U_k^\top \boldsymbol{G}_{k,k} T_{k}$; we let $\widetilde{G}_{0,1}=0$, $\widetilde{z}_{0}=0$, $\bz^o_0=0$, $\blambda^o_{K+1}=0$, $\bG_{1,0}=0$, and $\bG_{K+1,K}=0$ for convenience. {\color{blue} We denote dual variables for \eqref{eqn:lp-coarse-1-con} by $\blambda_k$.} Observe that the overall structure of problem \eqref{prob:lp-pre-coarse} is preserved in \eqref{prob:lp-coarse-1} but the corresponding variables do not have the same dimensionality ($\widetilde{z}_k\in\mathbb{R}^n\leftrightarrow \bz_k\in\mathbb{R}^{nL_k}$ and $\widetilde{\lambda}_k\in\mathbb{R}^m\leftrightarrow \blambda_k\in\mathbb{R}^{mL_k}$)

Coarse problem \eqref{prob:lp-coarse-1} can be equivalently written as:
\begin{subequations}\label{prob:lp-coarse-0}
  \begin{align}
    \min_{\widetilde{\bz}}\;&\widetilde{\bp}^\top \widetilde{\bz}\\
    \st\;& \widetilde{\bG} \widetilde{\bz} \geq \widetilde{\bd} \quad(\widetilde{\blambda}),
  \end{align}
\end{subequations}
where $\widetilde{\bp} = \{\widetilde{p}_k\}_{k=1}^K$, $\widetilde{\bd} = \{\widetilde{d}_k\}_{k=1}^K$, $\widetilde{\bG} = \{\widetilde{G}_{k,\ell}\}_{k,\ell=1}^K$ (we assume $\widetilde{G}_{k,\ell}=0$ unless $0\leq k-\ell\leq 1$). We observe that the following holds:
$\widetilde{\bp}= T^\top (\bp - \bG^\top \lambda^o)$, $\widetilde{\bd}= U^\top (\bd- \bG\bz^o )$, and $\widetilde{\bG}= U^\top \bG T$.
We represent \eqref{prob:lp-coarse-0} (or equivalently \eqref{prob:lp-coarse-1}) as $P(\widetilde{\bd};\widetilde{\bG},\widetilde{\bp})$. The definition of $\widetilde{\bp},\widetilde{\bG},\widetilde{\bd}$ depends on the choice of the coarse grid $\{M_k\}_{k=1}^K$ and the choice of the {\it primal-dual prior guess} $(\bz^o,\blambda^o)$. The primal-dual solutions of the coarse problem are denoted by $(\widetilde{\bz}^*,\widetilde{\blambda}^*)$ and are called the {\it coarse solutions}. The solution of $P(\widetilde{\bd},\widetilde{\bG},\widetilde{\bp})$ can be projected to the full space to obtain the solution on the full domain. Such a projection can be performed as $\bz' = \bz^o + T \widetilde{\bz}^*$ and  $\blambda' = \blambda^o + U \widetilde{\blambda}^*$. We call $(\bz',\blambda')$ the {\it projected coarse solution}. By inspecting the block diagonal structure of coarsening operators, one can see that each variable in $P(\widetilde{\bd},\widetilde{\bG},\widetilde{\bp})$ has upper and lower bounds. We can thus see that the feasible set of $P(\widetilde{\bd},\widetilde{\bG},\widetilde{\bp})$ is compact.

An intuitive justification for the formulation of \eqref{prob:lp-coarse-0} is as follows. First, we have aggregated the variables by replacing $\bz_k$ {\color{blue} by $\bz^o_k+T_k\widetilde{z}_k$}. By inspecting the structure of $T_k$, one can see that this manipulation reduces the number of variables. Furthermore, we take linear combinations of the constraints by left multiplying $U^\top_k$ on the constraint functions associated with $k$. Along with this, the $(\blambda^o_k)^\top \left[\bG_{k,k-1} T_{k-1}\widetilde{\bz}_{k-1}+ \bG_{k,k} T_k\widetilde{\bz}_k \right]$ terms are subtracted from the objective function to reflect the prior guess of the dual variables. By inspecting the structure of $U_k$, one can see that this manipulation reduces the number of constraints, and effectively aggregates the dual variables. Further justification for the formulation of \eqref{prob:lp-pre-coarse} will be given later after introducing additional nomenclature.

A formal justification for the formulation of \eqref{prob:lp-coarse-0} arises from its consistency with full problem \eqref{prob:lp}; here, we show that if one has a perfect prior guess of the primal-dual solution (i.e., $\bz^o=\bz^*$ and $\blambda^o=\blambda^*$), the projected coarse solution is equal to the solution of the original problem (i.e., $\bz^o=\bz'$ and $\blambda^o=\blambda'$). We formally state this result as the following proposition

\begin{proposition}\label{prop:cons}
  Consider $P(\bd;\bG,\bp)$ and $P(\widetilde{\bd};\widetilde{\bG},\widetilde{\bp})$ and use $(\bz^o,\blambda^o)$ to denote the primal-dual solution of $P(\bd;\bG,\bp)$. We have that $(\widetilde{\bz},\widetilde{\blambda})=0$ is a primal-dual solution of $P(\widetilde{\bd};\widetilde{\bG},\widetilde{\bp})$.
\end{proposition}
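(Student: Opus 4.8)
The plan is to verify that the proposed point $(\widetilde{\bz},\widetilde{\blambda})=(\bzero,\bzero)$ satisfies the KKT conditions of the coarse problem $P(\widetilde{\bd};\widetilde{\bG},\widetilde{\bp})$; since this problem is a linear program, the KKT conditions are sufficient for primal–dual optimality, so this will establish the claim. First I would record the conditions that the given pair $(\bz^o,\blambda^o)$ satisfies as a primal–dual solution of $P(\bd;\bG,\bp)$: primal feasibility $\bG\bz^o\geq\bd$, the sign constraint $\blambda^o\geq\bzero$, stationarity $\bp-\bG^\top\blambda^o=\bzero$, and complementary slackness $(\blambda^o)^\top(\bG\bz^o-\bd)=0$. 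I would then write out the four conditions that must hold for the coarse problem at $(\widetilde{\bz},\widetilde{\blambda})=(\bzero,\bzero)$: primal feasibility $\widetilde{\bG}\,\bzero\geq\widetilde{\bd}$, the sign constraint $\bzero\geq\bzero$, stationarity $\widetilde{\bp}-\widetilde{\bG}^\top\bzero=\bzero$, and complementary slackness.

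Next I would dispatch the three immediate conditions. The sign constraint and complementary slackness hold trivially because $\widetilde{\blambda}=\bzero$. Stationarity reduces to showing $\widetilde{\bp}=\bzero$; substituting the definition $\widetilde{\bp}=T^\top(\bp-\bG^\top\blambda^o)$ and invoking the stationarity of the original solution $\bp-\bG^\top\blambda^o=\bzero$ yields $\widetilde{\bp}=T^\top\bzero=\bzero$ at once.

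The only condition requiring an argument is primal feasibility, which amounts to $\widetilde{\bd}\leq\bzero$. Here I would substitute $\widetilde{\bd}=U^\top(\bd-\bG\bz^o)$, note that primal feasibility of the original solution gives $\bd-\bG\bz^o\leq\bzero$ componentwise, and then invoke the fact---recorded just after Definition~\ref{def:crs}---that the coarsening operator satisfies $U\geq 0$ (so $U^\top$ has nonnegative entries). A nonnegative matrix applied to a nonpositive vector produces a nonpositive vector, hence $\widetilde{\bd}\leq\bzero$ and therefore $\widetilde{\bG}\,\bzero=\bzero\geq\widetilde{\bd}$. This sign-preservation step is the one nontrivial point of the proof, and it is worth flagging precisely because it is the sole place where the \emph{nonnegativity} of $U$ (rather than merely its orthogonality) is used; without it, aggregating the constraint residuals could flip the inequality.

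Finally, with all four KKT conditions verified and the coarse problem being a linear program, I would conclude by the sufficiency of the KKT conditions that $(\widetilde{\bz},\widetilde{\blambda})=(\bzero,\bzero)$ is a primal–dual solution of $P(\widetilde{\bd};\widetilde{\bG},\widetilde{\bp})$, which is exactly the asserted consistency property.
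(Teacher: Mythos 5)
Your proposal is correct and follows essentially the same route as the paper's proof: both verify the KKT conditions of $P(\widetilde{\bd};\widetilde{\bG},\widetilde{\bp})$ at $(\widetilde{\bz},\widetilde{\blambda})=(\bzero,\bzero)$ by left-multiplying the original stationarity condition by $T^\top$ (giving $\widetilde{\bp}=\bzero$) and the original primal feasibility by $U^\top$ with $U\geq 0$ (giving $\widetilde{\bd}\leq\bzero$), then conclude optimality from convexity. Your write-up is, if anything, slightly more explicit than the paper's in isolating the nonnegativity of $U$ as the one nontrivial ingredient, but the argument is the same.
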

\begin{proof}
  From the KKT conditions of $P(\bd;\bG,\bp)$ at $(\bz^o,\blambda^o)$, we have:
  \begin{align*}
    &\bp - \bG^\top \blambda^o= 0,
    &&\bG\bz^o - \bd \geq0\\
    &\blambda^o \geq 0,
    &&\diag(\blambda^o) (\bG{\color{blue} \bz^o}-\bd) = 0.
  \end{align*}
  By left multiplying using $T^\top$ and $U^\top$ we have that
  \begin{align*}
    &T^\top \bp - (\bG T)^\top \blambda^o  = 0,
    &&  U^\top \bG\bz^o - U^\top \bd \geq 0
  \end{align*}
  hold (recall that $U\geq 0$). The KKT conditions of $P(\widetilde{\bd},\widetilde{\bG},\widetilde{\bp})$ state that:
  \begin{align}\label{eqn:kkt-2}
    \begin{aligned}
      T^\top \bp - (\bG T)^\top \blambda^o -\widetilde{\bG}^\top \widetilde{\blambda}&= 0\\
      \widetilde{\bG}\widetilde{\bz} + U^\top \bG\bz^o- U^\top \bd &\geq 0\\
      \widetilde{\blambda}\geq 0,\quad
      \diag(\widetilde{\blambda})(\bG\widetilde{\bz}-\widetilde{\bd})&=0.
    \end{aligned}
  \end{align}
  From \eqref{eqn:kkt-2}, one can show that if $(\widetilde{\bz},\widetilde{\blambda})=0$, the KKT conditions \eqref{eqn:kkt-2} are satisfied. From the convexity of the problem and the satisfaction of the KKT conditions, $(\widetilde{\bz},\widetilde{\blambda})=0$ is an optimal primal-dual solution of $P(\widetilde{\bd},\widetilde{\bG},\widetilde{\bp})$.
\end{proof}
{\color{blue} We highlight that the perfect prior guess is not a requirement (this is mentioned to highlight the consistency of the formulation). Specifically, Proposition \ref{prop:cons} is used to emphasize that the solution of coarse problem becomes close to the original solution as the prior guess of the solution becomes accurate.}

The following proposition establishes a condition that guarantees the existence of a solution for $P(\widetilde{\bd};\widetilde{\bG},\widetilde{\bp})$. 
\begin{proposition}\label{prop:feas}
  Consider $\bd\in\mathbb{D}$ and $\bz^o\in\mathbb{R}^{nN}$ that is feasible to $P(\bd;\bG,\bp)$, then there exists a solution of $P(\widetilde{\bd};\widetilde{\bG},\widetilde{\bp})$.
\end{proposition}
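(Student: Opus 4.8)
The plan is to reuse the existence argument behind Proposition~\ref{prop:existence}: a continuous objective attains its minimum over a nonempty compact set. The feasible set of $P(\widetilde{\bd};\widetilde{\bG},\widetilde{\bp})$ in \eqref{prob:lp-coarse-0} is already known to be compact from the discussion preceding this proposition (it is closed, being defined by the non-strict inequalities $\widetilde{\bG}\widetilde{\bz}\geq\widetilde{\bd}$, and bounded, since each coarse variable $\widetilde{z}_k$ inherits upper and lower bounds from the box-constraint rows of $\bG_{k,k}$ that survive coarsening). The objective $\widetilde{\bp}^\top\widetilde{\bz}$ is linear, hence continuous. Consequently, the whole proposition reduces to establishing that the coarse feasible set is \emph{nonempty}.

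First I would exhibit the explicit candidate $\widetilde{\bz}=0$ and verify its feasibility. Since $\widetilde{\bG}\cdot 0=0$, feasibility is equivalent to $\widetilde{\bd}\leq 0$. Using $\widetilde{\bd}=U^\top(\bd-\bG\bz^o)=-U^\top(\bG\bz^o-\bd)$, this in turn reduces to the single claim $U^\top(\bG\bz^o-\bd)\geq 0$. The two hypotheses combine cleanly here: feasibility of $\bz^o$ for $P(\bd;\bG,\bp)$ gives $\bG\bz^o-\bd\geq 0$ componentwise, and the coarsening operator satisfies $U\geq 0$ (noted after Definition~\ref{def:crs}), so $U^\top\geq 0$ componentwise as well. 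A componentwise-nonnegative matrix applied to a componentwise-nonnegative vector yields a nonnegative vector, so $U^\top(\bG\bz^o-\bd)\geq 0$ and hence $\widetilde{\bd}\leq 0$. This shows $\widetilde{\bz}=0$ is feasible and the feasible set is nonempty.

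With nonemptiness and compactness in hand, I would then close exactly as in Proposition~\ref{prop:existence}: a continuous objective over a nonempty compact set attains a minimizer, which is by definition a solution of $P(\widetilde{\bd};\widetilde{\bG},\widetilde{\bp})$.

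The only genuinely delicate point is the sign bookkeeping that makes $\widetilde{\bz}=0$ feasible. It is essential that the right-hand side is coarsened with the nonnegative operator $U$, because an arbitrary linear combination of the valid inequalities $\bG\bz^o\geq\bd$ could reverse the inequality direction; I would therefore invoke $U\geq 0$ explicitly rather than merely its orthogonality. I expect no obstacle beyond this: the compactness can be cited from the preceding discussion, although if a self-contained argument were required I would verify it by tracking how the state and control box-constraint rows of $\bG_{k,k}$, under left multiplication by $U_k^\top$ and right multiplication by $T_k$, collapse to ordinary box bounds $[I_{n_x}\;0]\widetilde{z}_k$ and $[0\;I_{n_u}]\widetilde{z}_k$ on each coarse variable.
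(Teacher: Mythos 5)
Your proposal is correct and follows essentially the same route as the paper's own proof: exhibit $\widetilde{\bz}=0$ as a feasible point by combining $\bG\bz^o\geq\bd$ with the componentwise nonnegativity $U\geq 0$ to get $\widetilde{\bd}=U^\top(\bd-\bG\bz^o)\leq 0$, then invoke compactness of the coarse feasible set to obtain a minimizer. Your explicit remark that orthogonality of $U$ alone would not suffice (only $U\geq 0$ preserves the inequality direction) is exactly the point the paper flags with its parenthetical ``recall that $U\geq 0$.''
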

\begin{proof}
  Since $\bz^o$ is feasible, we have $\bG\bz^o \geq \bd$. By left multiplying $U^\top$, we have $U^\top \bG \bz^o \geq U^\top \bd$ (recall that $U\geq 0$). This implies $\widetilde{\bd} \leq 0$, so one can see that $\widetilde{\bz}=0$ satisfies $\widetilde{G} \widetilde{\bz} \geq  \widetilde{\bd}$. That is, $\widetilde{\bz}=0$ is feasible. The existence of a solution follows from feasibility and compactness of the feasible set.
\end{proof}

\subsection{Sensitivity to Coarsening}

We now analyze the sensitivity of the solution to coarsening. In other words, we estimate how the solution changes when the coarsening scheme is applied. Theorem \ref{thm:eds} will be used to establish the error bounds; to do so, we need to cast coarsening as a data perturbation. 

We begin by defining the notion of {\it free variables}. The index sets $\widetilde{\mathbb{S}}$ and $\mathbb{S}$ of free variables are defined as follows:
\begin{align*}
  \widetilde{\mathbb{S}}&:=\left\{k\in\mathbb{I}_{1:K}:|\mathbb{I}_{M_k:N_k}|=1 \text{ and }\lambda^o_i=0\right\}\\
  \mathbb{S}&:=\bigcup_{k\in\widetilde{\mathbb{S}}} \mathbb{I}_{M_k:N_k}.
\end{align*}
The set of free variables can be regarded as the set of variables that are not coarsened. The set $\widetilde{\mathbb{S}}$ is the associated index set on $\mathbb{I}_{1:K}$ and the set $\mathbb{S}$ is the associated index set on $\mathbb{I}_{1:N}$. We also define the following to quantify the coarsening error:
  \begin{align*}
    \Delta :=\max\left\{
    \max_{j\in\mathbb{I}_{1:N}}
    \Vert d_j-G_{j,j-1}z'_{j-1}-G_{j,j}z'_j\Vert
    : \bz' \in \mathbb{P} \right\},
  \end{align*}
  where $\mathbb{P}:=\{ \bz^o+ T\widetilde{\bz}: \widetilde{\bG}\widetilde{\bz} \geq \widetilde{\bd}\}$. The existence of the maximum directly comes from the compactness of $\mathbb{P}$. Note that $\Delta$ depends on the choice of $\{M_k\}_{k=1}^K$ and $(\bz^o,\blambda^o)$. 
  
We now state our main result.

\begin{theorem}[EDS to Coarsening]\label{thm:crs}
  Consider $\bd\in\mathbb{D}$ and $\bz^o\in\mathbb{R}^{nN}$ that is feasible to $P(\bd;\bG,\bp)$. There exists a solution $\bz^*=(z_1^*,\cdots,z_N^*)$ of $P(\bd;\bG,\bp)$ and a projected coarse solution $\bz'=(z'_1,\cdots,z'_N)$ of $P(\widetilde{\bd},\widetilde{\bG},\widetilde{\bp})$ such that the following holds for $i\in\mathbb{I}_{1:N}$.
  \begin{align}\label{eqn:crs}
    \Vert z_i^*-z'_i\Vert \leq \sum_{j\in \mathbb{I}_{1:N}\setminus\mathbb{S}}\Gamma \Delta\rho^{(|i-j|-1)_+}
  \end{align}
\end{theorem}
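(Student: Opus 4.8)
The plan is to recast coarsening as a right-hand-side perturbation of the full problem: I would exhibit a data vector $\bd'\in\mathbb{D}$ for which the projected coarse solution $\bz'$ is an \emph{exact} optimal solution of $P(\bd')$, and then apply Theorem~\ref{thm:eds} to compare $\bz'$ with a solution of the original $P(\bd)$. The estimate \eqref{eqn:crs} would then follow once I show that $\bd-\bd'$ is bounded stage-wise by $\Delta$ and vanishes on the free stages $\mathbb{S}$.

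First I would construct $\bd'$ together with a dual certificate, using the prior dual $\blambda^o$. This vector is dual feasible ($\bG^\top\blambda^o=\bp$, $\blambda^o\geq 0$) and, by the definition of $\widetilde{\mathbb{S}}$, vanishes on every free stage. Componentwise I would set $d'_\ell:=(\bG\bz')_\ell$ on the constraints with $\lambda^o_\ell>0$ and $d'_\ell:=\min\!\big(d_\ell,(\bG\bz')_\ell\big)$ on those with $\lambda^o_\ell=0$. By construction $\bz'$ is feasible for $P(\bd')$ (hence $\bd'\in\mathbb{D}$) and complementary with $\blambda^o$; together with the stationarity relation $\bG^\top\blambda^o=\bp$, the KKT conditions certify that $\bz'$ solves $P(\bd')$.

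Next I would bound $\Vert d_j-d'_j\Vert$ stage by stage. At a free stage $j\in\mathbb{S}$ the corresponding block is a singleton with $U_k=T_k=I$, so expanding the coarse constraint \eqref{eqn:lp-coarse-1-con} at that block collapses exactly to the full constraint $G_{j,j-1}z'_{j-1}+G_{j,j}z'_j\geq d_j$; hence the residual there is nonpositive componentwise, and since $\lambda^o=0$ at that stage the definition of $\bd'$ gives $d'_j=d_j$, i.e. zero perturbation. At every remaining stage $d_\ell-d'_\ell$ equals either the residual $d_\ell-(\bG\bz')_\ell$ or its positive part, and because $\bz'\in\mathbb{P}$ the defining maximum in $\Delta$ yields $\Vert d_j-d'_j\Vert\leq\Delta$.

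Finally I would apply Theorem~\ref{thm:eds} to $\bd,\bd'\in\mathbb{D}$, which bounds the stage-wise difference of the two solutions by $\sum_{j=1}^N\Gamma\rho^{(|i-j|-1)_+}\Vert d_j-d'_j\Vert$; inserting the perturbation estimate (zero on $\mathbb{S}$, at most $\Delta$ elsewhere) collapses this to the claimed $\sum_{j\in\mathbb{I}_{1:N}\setminus\mathbb{S}}\Gamma\Delta\rho^{(|i-j|-1)_+}$. I expect the first step to be the main obstacle: certifying that $\bz'$ is a genuine optimal solution of some $P(\bd')$ whose perturbation is supported off $\mathbb{S}$ is precisely where the dual feasibility of $\blambda^o$ and its vanishing on the free stages are needed. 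A secondary subtlety is that Theorem~\ref{thm:eds} only guarantees \emph{some} optimal pair, so I would need to ensure that its endpoint solution at $\bd'$ can be taken to be $\bz'$ -- for instance by choosing, in the path construction behind Theorem~\ref{thm:basic}, a terminal basis whose basic solution coincides with $\bz'$.
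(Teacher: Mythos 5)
Your overall strategy---recast coarsening as a right-hand-side perturbation $\bd\mapsto\bd'$ supported off $\mathbb{S}$, bound $\Vert d_j-d'_j\Vert$ by $\Delta$, and invoke Theorem~\ref{thm:eds}---is exactly the paper's strategy (Lemma~\ref{lem:sens-to-cor} plus the short proof of Theorem~\ref{thm:crs}). But your execution of the first step has a genuine gap: you certify optimality of $\bz'$ for $P(\bd')$ by using $\blambda^o$ as a dual certificate, asserting $\bG^\top\blambda^o=\bp$ and $\blambda^o\geq 0$. Neither is among the hypotheses of Theorem~\ref{thm:crs}: the theorem assumes only that $\bz^o$ is \emph{primal} feasible, while $\blambda^o$ is an arbitrary prior guess whose only relevant property is that it vanishes on the free stages (that is how $\widetilde{\mathbb{S}}$ is defined). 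Indeed, in the paper's case study the prior is $(\bz^o,\blambda^o)=0$, for which stationarity $\bG^\top\blambda^o=\bp$ fails outright unless $\bp=0$. Without a dual-feasible $\blambda^o$, your KKT argument collapses, and with it the claim that $\bz'$ solves $P(\bd')$.

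The ``secondary subtlety'' you flag is also more serious than you suggest, and the paper resolves it in the opposite direction from your proposed fix. Since Theorem~\ref{thm:eds} only pairs \emph{some} solution of $P(\bd)$ with \emph{some} solution of $P(\bd')$, steering the basis path of Theorem~\ref{thm:basic} to terminate at your prescribed $\bz'$ is not available: that construction terminates at a \emph{basic} optimal solution, and a projected coarse solution $\bz'=\bz^o+T\widetilde{\bz}^*$ need not be basic for $P(\bd')$. The paper instead defines $d'_i:=G_{i,i-1}z'_{i-1}+G_{i,i}z'_i$ at every coarsened stage (no $\min$), shows only that $\bz'$ is \emph{feasible} for $P(\bd')$, and then proves the reverse inclusion: \emph{every} optimal solution $\bz''$ of $P(\bd')$ is itself a projected coarse solution. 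The mechanism is that $\bG$ contains paired $\pm$ inequalities (from the dynamics/algebraic equalities and the state and control bounds), so at coarsened stages the constraints $\bG\bz''\geq\bd'$ force equalities that pin $\bz''_k=\bz'_k$; optimality then transfers to the coarse problem through the candidate $T^\top(\bz''-\bz^o)$, using only that $\blambda^o_k=0$ on $\widetilde{\mathbb{S}}$---no dual feasibility is ever needed. With that lemma in hand, whatever solution of $P(\bd')$ Theorem~\ref{thm:eds} happens to produce automatically qualifies as a projected coarse solution, and the bound follows. Note finally that your $\min\bigl(d_\ell,(\bG\bz')_\ell\bigr)$ construction would break this repair: on rows with $\lambda^o_\ell=0$ it relaxes rather than pins the constraint, so the equalities needed to conclude $\bz''_k=\bz'_k$ at coarsened stages are lost.
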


To prove Theorem \ref{thm:crs}, we first prove a technical lemma.
\begin{lemma}\label{lem:sens-to-cor}
  Let $\bz'$ be a projected coarse solution of $P(\widetilde{\bd},\widetilde{\bG},\widetilde{\bp})$. Then there exists a solution of $P(\bd'; \bG,\bp)$ and any of its solutions are projected coarse solutions of $P(\widetilde{\bd},\widetilde{\bG},\widetilde{\bp})$, where $\bd':=(d'_1,\cdots,d'_N)$ is defined by:
  \begin{align*}
    d'_i:=
    \begin{cases}
      d_i&\text{if } i\in\mathbb{S}\\
      G_{i,i-1}{z}'_{i-1}+G_{i,i}{z}'_i&\text{otherwise.}
    \end{cases}
  \end{align*}
\end{lemma}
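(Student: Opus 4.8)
The plan is to treat coarsening as a data perturbation and prove the two halves of the statement separately: first that the given projected coarse solution $\bz'$ is feasible for $P(\bd';\bG,\bp)$ (which delivers existence of a solution), and then that a KKT-level correspondence between $P(\bd';\bG,\bp)$ and the coarse problem $P(\widetilde{\bd};\widetilde{\bG},\widetilde{\bp})$ forces every solution of $P(\bd')$ into the projected form $\bz^o+T\widetilde{\bz}$ with $\widetilde{\bz}$ optimal for the coarse problem. Throughout I would lean on the orthonormality $T^\top T=I$, $U^\top U=I$ and the nonnegativity $T,U\geq0$ of the coarsening operators from Definition \ref{def:crs}, together with the identities $\widetilde{\bG}=U^\top\bG T$, $\widetilde{\bp}=T^\top(\bp-\bG^\top\blambda^o)$, and $\widetilde{\bd}=U^\top(\bd-\bG\bz^o)$.

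First I would prove existence. By construction, at every non-free index $i\in\mathbb{I}_{1:N}\setminus\mathbb{S}$ the data satisfies $d_i'=G_{i,i-1}z_{i-1}'+G_{i,i}z_i'=(\bG\bz')_i$, so the $i$-th constraint block holds with equality at $\bz'$. For a free index $i\in\mathbb{S}$ the enclosing coarse block $k\in\widetilde{\mathbb{S}}$ is a singleton ($L_k=1$), hence $U_k=I_m$; the coarse feasibility $\widetilde{\bG}\widetilde{\bz}^*\geq\widetilde{\bd}$, which rewrites as $U^\top(\bG\bz'-\bd)\geq0$, then yields $(\bG\bz'-\bd)_i\geq0$ on that block, and since $d_i'=d_i$ this gives $(\bG\bz')_i\geq d_i'$. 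Thus $\bz'$ is feasible for $P(\bd')$, so $\bd'\in\mathbb{D}$, and a solution exists by Proposition \ref{prop:existence} (continuous objective over a nonempty compact feasible set).

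For the converse inclusion I would take a solution $\bz^\star$ of $P(\bd')$ with optimal multiplier $\blambda^\star$ and show that $\widetilde{\bz}:=T^\top(\bz^\star-\bz^o)$ and $\widetilde{\blambda}:=U^\top(\blambda^\star-\blambda^o)$ satisfy the KKT system of the coarse problem. Left-multiplying the stationarity $\bp-\bG^\top\blambda^\star=0$ by $T^\top$ should recover the coarse stationarity $\widetilde{\bp}-\widetilde{\bG}^\top\widetilde{\blambda}=0$; left-multiplying primal feasibility by $U^\top\geq0$ and using that the non-free constraints are active should recover $\widetilde{\bG}\widetilde{\bz}\geq\widetilde{\bd}$; and the free-set definition ($\lambda_i^o=0$ on singleton free blocks, where $U_k=I$) should transfer complementarity block by block. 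Projecting back via $\bz^o+T\widetilde{\bz}=\bz^o+TT^\top(\bz^\star-\bz^o)$ would then identify $\bz^\star$ with a projected coarse solution.

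The hard part will be closing the gap between the full-space stationarity $\bp-\bG^\top\blambda^\star=0$ of $P(\bd')$ and the merely aggregated stationarity $T^\top(\bp-\bG^\top\blambda')=0$ returned by the coarse KKT conditions — equivalently, showing that at optimality $\bz^\star-\bz^o$ lies in $\mathrm{range}(T)$, so that $TT^\top(\bz^\star-\bz^o)=\bz^\star-\bz^o$ (the offset is constant within each coarse block). Here I would exploit the perturbation structure, namely that $\bd'$ renders each non-free constraint tight at the relevant solution, and combine this activity pattern with complementarity slackness to pin the offset inside every aggregated block. I expect the genuinely delicate situation to be non-unique (degenerate) optima, where a flat cost direction could a priori push a solution outside $\bz^o+\mathrm{range}(T)$; to handle this cleanly I would invoke the basic-solution viewpoint of Section \ref{sec:eds}, selecting the solution along the basic-solution path produced by Theorem \ref{thm:basic}, so that the solution inherited by the subsequent EDS argument is guaranteed to be of the projected coarse form.
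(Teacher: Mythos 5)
Your first half (feasibility of $\bz'$ for $P(\bd';\bG,\bp)$, hence existence of a solution by compactness) is correct and coincides with the paper's argument. The second half, however, has a genuine gap, and it sits exactly where you flagged ``the hard part.'' The dual transfer you propose does not work: with $\widetilde{\blambda}:=U^\top(\blambda^\star-\blambda^o)$, coarse stationarity $\widetilde{\bp}-\widetilde{\bG}^\top\widetilde{\blambda}=0$ reduces, after substituting the full-space stationarity $\bp=\bG^\top\blambda^\star$ of $P(\bd')$, to $T^\top\bG^\top(I-UU^\top)(\blambda^\star-\blambda^o)=0$, which would require $\blambda^\star-\blambda^o$ to lie in $\mathrm{range}(U)$; nothing forces this, and even $\widetilde{\blambda}\geq 0$ can fail since $\blambda^o$ is an arbitrary prior guess, not a dual solution. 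The paper never constructs a coarse dual certificate at all: it takes an arbitrary solution $\bz''$ of $P(\bd';\bG,\bp)$, shows $T^\top(\bz''-\bz^o)$ is feasible for $P(\widetilde{\bd},\widetilde{\bG},\widetilde{\bp})$, and then shows its coarse objective is at most that of the coarse optimum $\widetilde{\bz}^*$ by a direct primal comparison, in which the correction term $(\blambda^o)^\top(\bG\bz''-\bG\bz')$ vanishes because $\blambda^o_k=0$ on free blocks and $\bG\bz''$ agrees with $\bG\bz'$ on coarsened blocks. No KKT system for the coarse problem is ever verified.

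More importantly, the mechanism that pins $\bz''-\bz^o$ into $\mathrm{range}(T)$ is not ``activity pattern plus complementarity slackness''; it is a purely structural feasibility fact that your proposal never identifies. Because $\bG$ contains the rows $\pm I_{n_x}$ and $\pm I_{n_u}$ coming from the bounds \eqref{prob:mpc-con-3}--\eqref{prob:mpc-con-4}, the perturbed data $d'_i=G_{i,i-1}z'_{i-1}+G_{i,i}z'_i$ at a coarsened index contains the entries $x'_i,-x'_i,u'_i,-u'_i$; feasibility of \emph{any} solution $\bz''$ of $P(\bd')$ then reads $x''_i\geq x'_i$ and $-x''_i\geq -x'_i$ (and similarly for $u$), forcing $z''_i=z'_i$ at every coarsened index --- equality of the solutions themselves, which is stronger than constancy of the offset within blocks. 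Combined with $\bz'-\bz^o=T\widetilde{\bz}^*$ and $T_k=I$ on free (singleton) blocks, this yields $\bz''-\bz^o=TT^\top(\bz''-\bz^o)$. Tightness of a generic linear system would not give this; the $\pm I$ rows do, and complementarity plays no role. Finally, your fallback --- selecting a well-behaved solution along the basic-solution path of Theorem \ref{thm:basic} --- proves a strictly weaker statement than the lemma and would break its downstream use: in the proof of Theorem \ref{thm:crs}, Theorem \ref{thm:eds} hands you an uncontrolled solution of $P(\bd';\bG,\bp)$, and one must know that \emph{that} solution, whatever it is, is a projected coarse solution; this is precisely why the lemma asserts the property for all solutions.
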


\begin{proof}
  First we show that $\bz'$ is feasible to $P(\bd';\bG,\bp)$. Let $\widetilde{\bz}^*$ be the coarse solution associated with the projected coarse solution $\bz'$. The feasibility of $\widetilde{\bz}^*$ to $P(\widetilde{\bd},\widetilde{\bG},\widetilde{\bp})$ implies that $U^\top \bG T \widetilde{\bz}^* \geq U^\top \bd - U^\top \bG z^o$. Thus, $U^\top \bG\bz' \geq U^\top \bd$ holds. This implies:
  \begin{align}\label{eqn:eds-to-cor-0}
    \boldsymbol{G}_{k,k-1}\boldsymbol{z}_{k-1}'+\boldsymbol{G}_{k,k}\boldsymbol{z}_{k}' &\geq \boldsymbol{d}'_k
  \end{align}
  holds for $k\in\widetilde{\mathbb{S}}$. Furthermore, by the definition of $\bd'$, we have that \eqref{eqn:eds-to-cor-0} also holds for $k\in\mathbb{I}_{1:K}\setminus\widetilde{\mathbb{S}}$. This implies that $\bG\bz'\geq \bd'$. Thus, $\bz'$ is feasible to $P(\bd';\bG,\bp)$.

  Since $P(\bd';\bG,\bp)$ is feasible and the feasible set is compact, there exists a solution of $P(\bd';\bG,\bp)$. This proves the first part of the lemma. Let $\bz''$ be a solution of $P(\bd';\bG,\bp)$. Now we aim to show that $T^\top (\bz''-\bz^o)$ is a solution of $P(\widetilde{\bd},\widetilde{\bG},\widetilde{\bp})$.

  First we show the feasibility of $T^\top (\bz''-\bz^o)$ to $P(\widetilde{\bd},\widetilde{\bG},\widetilde{\bp})$. From the feasibility of $\bz'$ and $\bz''$ to $P(\bd';\bG,\bp)$, we have $\bG\bz' \geq \bd'$ and $\bG\bz''\geq \bd'$. By inspecting the definition of $\bG$, one can see every constraint \eqref{prob:lp-pre-coarse-con} for $k\in\mathbb{I}_{1:K}\setminus\widetilde{\mathbb{S}}$ effectively becomes equality constraint. Thus, the following holds:
  \begin{align}\label{eqn:eq-1}
    \begin{aligned}
      \boldsymbol{G}_{k,k-1}\boldsymbol{z}'_{k-1} + \boldsymbol{G}_{k,k}\boldsymbol{z}'_{k} &=  \boldsymbol{d}'_k\\
      \boldsymbol{G}_{k,k-1}\boldsymbol{z}''_{k-1} + \boldsymbol{G}_{k,k}\boldsymbol{z}''_{k} &=  \boldsymbol{d}'_k,  
    \end{aligned}
    \quad k\in\mathbb{I}_{1:K}\setminus\widetilde{\mathbb{S}}.
  \end{align}
  {\color{blue} From \eqref{prob:mpc-con-3}-\eqref{prob:mpc-con-4} and the fact that $U^\top_k$ effectively takes linear combinations over $i\in\mathbb{I}_{M_k:N_k}$, one can see that 
    \begin{align*}
      \bx''_k=\bx'_k,\quad \bu''_k=\bu'_k
    \end{align*}
    are included in $\bG_{k,k-1}\bz''_{k-1}+\bG_{k,k}\bz''_k=\bG_{k,k-1}\bz'_{k-1}+\bG_{k,k}\bz'_k$. This implies:}
  \begin{align}
    \boldsymbol{z}'_k &= \boldsymbol{z}''_k,\quad k\in\mathbb{I}_{1:K}\setminus\widetilde{\mathbb{S}}.\label{eqn:eq-2}
  \end{align}
  We have that for $k\in\widetilde{\mathbb{S}}$,
  \begin{align}\label{eqn:eq-3}
    T_kT_k^\top (\boldsymbol{z}_k''-\boldsymbol{z}^o_k) = \boldsymbol{z}''_k-\boldsymbol{z}^o_k,
  \end{align}
  {\color{blue} since $T_k$ is identity}. For $k\in\mathbb{I}_{1:K}\setminus\widetilde{\mathbb{S}}$,
  \begin{align*}
    T_kT_k^\top (\boldsymbol{z}_k''-\boldsymbol{z}^o_k) &= T_kT_k^\top (\boldsymbol{z}'_k-\boldsymbol{z}^o_k)\\
    &= T_kT_k^\top T_k \widetilde{z}^*_k\\
    &= T_k \widetilde{z}^*_k\\
    &= \boldsymbol{z}'_k-\boldsymbol{z}^o_k\\
    &= \boldsymbol{z}''_k-\boldsymbol{z}^o_k,
  \end{align*}
  where the first equality follows from \eqref{eqn:eq-2}, the second equality follows from the definition of projected coarse solution, the third equality follows from the orthogonality of $T_k$, the fourth equality follows from the definition of projected coarse solution, and the last equality follows from \eqref{eqn:eq-2}. This implies that \eqref{eqn:eq-3} also holds for $k\in\widetilde{\mathbb{S}}$. By the block diagonal structure of $T$, we can see that the following holds:
  \begin{align}\label{eqn:eds-to-cor-2}
    \bz''-\bz^o = TT^\top (\bz''-\bz^o).
  \end{align}
  By multiplying $U^\top$ to $\bG\bz'' \geq \bd'$ and using \eqref{eqn:eds-to-cor-2}:
  \begin{align}\label{eqn:ineq-1}
    U^\top \bG \left(TT^\top (\bz''-\bz^o) +  \bz^o\right)&\geq U^\top \bd'.
  \end{align}
  We have that from the definition of $\bd'$, the following holds:
  \begin{align}\label{eqn:ud-1}
    U_k^\top \boldsymbol{d}'_k=  U_k^\top \boldsymbol{d}_k,\; k\in\widetilde{\mathbb{S}}.
  \end{align}
  Moreover, for $k\in\mathbb{I}_{1:K}\setminus\widetilde{\mathbb{S}}$,
  \begin{subequations}\label{eqn:ud-2}
  \begin{align}
    U_k^\top \boldsymbol{d}'_k&= U_k^\top (\boldsymbol{G}_{k,k-1}\boldsymbol{z}'_{k-1} + \boldsymbol{G}_{k,k}\boldsymbol{z}'_{k}) \\
    &=\widetilde{G}_{k,k-1} \widetilde{z}^*_{k-1} +\widetilde{G}_{k,k} \widetilde{z}^*_k\\
    &\qquad+U_k^\top (\boldsymbol{G}_{k,k-1}\boldsymbol{z}^o_{k-1} + \boldsymbol{G}_{k,k}\boldsymbol{z}^o_{k})\nonumber\\
    &\geq U^\top_k \boldsymbol{d}_k ,
  \end{align}
  \end{subequations}
  where the second equality comes from the definition of projected coarse solution, and the inequality follows from the feasibility of $\widetilde{\bz}^*$ to $P(\widetilde{\bd},\widetilde{\bG},\widetilde{\bp})$. From \eqref{eqn:ud-1}-\eqref{eqn:ud-2}, we have that $U^\top \bd' \geq U^\top \bd$. This and \eqref{eqn:ineq-1} implies $\widetilde{\bG}T^\top (\bz''-\bz^o) \geq  \widetilde{\bd}$ and 
thus $T^\top (\bz''-\bz^o)$ is feasible to $P(\widetilde{\bd},\widetilde{\bG},\widetilde{\bp})$.

We now show that $T^\top (\bz''-\bz^o)$ is optimal to $P(\widetilde{\bd},\widetilde{\bG},\widetilde{\bp})$. From the feasibility of $\bz'$ to $P(\bd',\bG,\bp)$ we have that:
  \begin{align}\label{eqn:eds-to-cor-1}
    \bp^\top \bz''\leq \bp^\top \bz'.
  \end{align}
  From \eqref{eqn:eds-to-cor-2}, we have:
  \begin{subequations}\label{eqn:eds-to-cor-3.5}
    \begin{align}
      \bp^\top \bz''& =  \bp^\top (TT^\top (\bz''-\bz^o) + \bz^o ) \\
      &=(T^\top \bp)^\top T^\top (\bz''-\bz^o) + \bp^\top \bz^o \\
      \bp^\top \bz'& = (T^\top \bp)^\top \widetilde{\bz}^* + \bp^\top \bz^o.
    \end{align}
  \end{subequations}
  From \eqref{eqn:eds-to-cor-1}-\eqref{eqn:eds-to-cor-3.5}, we obtain:
  \begin{align}\label{eqn:eds-to-cor-4}
    (T^\top \bp)^\top T^\top (\bz''-\bz^o) \leq (T^\top \bp)^\top \widetilde{\bz}^*.
  \end{align}
  Next, we {\color{blue} compare} the objective values of $T^\top (\bz''-\bz^o)$ and that of $\widetilde{\bz}^*$ for $P(\widetilde{\bd},\widetilde{\bG},\widetilde{\bp})$.
  \begin{align}\label{eqn:eds-to-cor-5}
    &\widetilde{\bp}^\top T^\top (\bz''-\bz^o) - \widetilde{\bp}^\top\widetilde{\bz}^*\\\nonumber
    &= (T^\top \bp)^\top T^\top (\bz''-\bz^o) - (T^\top \bp)^\top \widetilde{\bz}^* \\\nonumber
    &\qquad - \left(T^\top \bG^\top \blambda^o \right)^\top \left(T^\top  (\bz''-\bz^o) - \widetilde{\bz}^* \right)\\\nonumber
    &\leq-\left(\bG^\top \blambda^o\right)^\top\left(\bz''-\bz' \right)\\\nonumber
    &\leq-(\blambda^o)^\top (\bG\bz''-\bG\bz') .
  \end{align}
  The first inequality follows from \eqref{eqn:eds-to-cor-4} and \eqref{eqn:eds-to-cor-2}. Recall that, by definition of $\widetilde{\mathbb{S}}$, $\boldsymbol{\lambda}^o_k = 0$ holds for $k\in\widetilde{\mathbb{S}}$.  From  \eqref{eqn:eq-1}, we have that
  \begin{align}\label{eqn:eq-4}
    \boldsymbol{G}_{k,k-1}\boldsymbol{z}'_{k-1} + \boldsymbol{G}_{k,k}\boldsymbol{z}'_{k} =
      \boldsymbol{G}_{k,k-1}\boldsymbol{z}''_{k-1} + \boldsymbol{G}_{k,k}\boldsymbol{z}''_{k}
  \end{align}
  holds for $k\in\mathbb{I}_{1:K}\setminus\widetilde{\mathbb{S}}$. This implies that $(\blambda^o)^\top (\bG\bz''-\bG\bz')=0$. Thus, \eqref{eqn:eds-to-cor-5} yields $\widetilde{\bp}^\top T^\top (\bz''-\bz^o)   - \widetilde{\bp}^\top\widetilde{\bz}^* \leq 0$.

  We observed that $T^\top (\bz''-\bz^o)$ is feasible to $P(\widetilde{\bd},\widetilde{\bG},\widetilde{\bp})$ and its objective is not greater than that of $\widetilde{\bz}^*$, which is a solution of $P(\widetilde{\bd},\widetilde{\bG},\widetilde{\bp})$. This implies that $T^\top (\bz''-\bz^o)$ is an optimal solution of $P(\widetilde{\bd},\widetilde{\bG},\widetilde{\bp})$. Therefore, $\bz^o+TT^\top (\bz''-\bz^o) = \bz''$ (here the equality follows from \eqref{eqn:eds-to-cor-2}) is a projected coarse solution of $P(\widetilde{\bd},\widetilde{\bG},\widetilde{\bp})$.
\end{proof}

Using Lemma \ref{lem:sens-to-cor}, one can compare $\bz'$ with $\bz^*$ by comparing $\bd'$ and $\bd$ using Theorem \ref{thm:eds}. In other words, coarsening can be regarded as a data perturbation. We now prove Theorem \ref{thm:crs}.

\begin{proof}[Proof of Theorem \ref{thm:crs}]
  By Proposition \ref{prop:feas} and the feasibility of $\bz^o$ to $P(\bd;\bG,\bp)$, there exists a solution of $P(\widetilde{\bd},\widetilde{\bG},\widetilde{\bp})$. We pick any of its projected coarse solution and construct $\bd'$ as given in Lemma \ref{lem:sens-to-cor}. From Lemma \ref{lem:sens-to-cor}, we have that $P(\bd';\bG,\bp)$ is feasible (thus $\bd'\in\mathbb{D}$). By Theorem \ref{thm:eds}, there exists a solution $\bz^*$ of $P(\bd;\bG,\bp)$ and a solution $\bz'$ of $P(\bd';\bG,\bp)$ such that:
  \begin{align*}
    &\Vert z^*_{i}(\bd)-z'_{i}( \bd')\Vert
    \leq \sum_{j=1}^N \Gamma\rho^{(|i-j|-1)_+}\Vert d_j- d'_j\Vert\\
    &\quad\leq \sum_{j\in\mathbb{I}_{1:N}\setminus\mathbb{S}}^N \Gamma\rho^{(|i-j|-1)_+}\left\Vert 
    d_j-G_{j,j-1}z'_{j-1} -G_{j,j}z'_j
    \right\Vert\\
    &\quad\leq \sum_{j\in\mathbb{I}_{1:N}\setminus\mathbb{S}}^N \Gamma \Delta\rho^{(|i-j|-1)_+},
  \end{align*}
  where the second inequality can be verified by inspecting the definition of $\bd'$ and the last inequality follows from the definition of $\Delta$. By Lemma \ref{lem:sens-to-cor}, $\bz'$ is a projected coarse solution of $P(\widetilde{\bd},\widetilde{\bG},\widetilde{\bp})$. 
\end{proof}

Recall that $i\in\mathbb{S}$ implies that the index $i$ is not coarsened. Accordingly, the index set $\mathbb{I}_{1:N}\setminus\mathbb{S}$ can be regarded as the set of time indexes that are coarsened. As such, each term in the right-hand side of \eqref{eqn:crs} represents the effect of coarsening. In receding horizon control, we are particularly interested in the error of $z_1'$, which is bounded as:
  \begin{align}\label{eqn:err1}
    \Vert z_1^*-z'_1\Vert\leq \sum_{j\in\mathbb{I}_{1:N}\setminus\mathbb{S}}^N \Gamma\Delta\rho^{(j-2)_+}.
  \end{align}

\subsection{Coarsening Strategies}\label{ssec:strategies}
We consider the following three coarsening strategies: equal-spacing scheme, full-then-sparse scheme, and diffusing-horizon scheme. Here we explain the procedure for creating the coarse grid for each strategy. Note that it suffices to explain the logic to choose $\{M_k\}_{k=1}^K\subseteq\mathbb{I}_{1:N}$. In the equal-spacing scheme, we place the points as:
  \begin{align*}
    M_k=\left\lfloor \dfrac{N(k-1)}{K}+1 \right\rfloor,\;k\in\mathbb{I}_{1:K}.
  \end{align*}
In the full-then-sparse scheme we choose the first $K$ points:
  \begin{align*}
    M_k&=k,\;k\in\mathbb{I}_{1:K}.
  \end{align*}
In the diffusing-horizon scheme we choose the points in a way that the spaces between the points increase exponentially:
  \begin{align*}
    M_k&=\max\left\{k,\left\lfloor (N+1)^{\frac{k-1}{K}}\right\rfloor\right\},\;k\in\mathbb{I}_{1:K}.
  \end{align*}
  The different coarsening schemes are illustrated in Fig. \ref{fig:strategies}. Theorem \ref{thm:crs} serves as a guiding principle for designing the coarse grid. Theorem \ref{thm:crs} implies that the farther away the coarsened indexes are placed, the more accurate the projected coarse solution $z'_1$ is. This provides a justification that inducing a monotonically increasing sparse grid is a good coarsening strategy. Thus, full-then-sparse and diffusing-horizon schemes are good strategies. However, the quantity $\Delta$ depends on the choice of coarsening scheme, but such a dependency is difficult to characterize. As such, Theorem \ref{thm:crs} does not allow quantitative comparison between coarsening strategies. In Section \ref{sec:cstudy} we perform numerical experiments to compare the practical effectiveness of the different coarsening strategies.

The feasibility of $z'_1$ (in the sense that $G_{1,1}z'_1=v_1$ holds so that it can be implemented) may not be guaranteed if $1\notin\mathbb{S}$. In such a case, one can modify the coarsening scheme by always selecting $M_2=2$, and then applying the coarsening scheme to $\mathbb{I}_{2:N}$ to obtain $M_3,\cdots,M_K$. In this way, the feasibility of $z'_1$ can always be guaranteed. 

{\color{blue}  EDS has been established for general nonlinear MPC in \cite{na2020exponential} under strong second order conditions. This EDS result holds {\it locally} (it is only guaranteed in a neighborhood of the base solution); accordingly, EDS will only hold if coarsening induces a parametric perturbation that is in a neighborhood of the solution. Specifically, for nonlinear problems, it does not seem possible to establish the error bound of Theorem \ref{thm:crs}. In practice, however, we expect that exponential coarsening will be effective. We also emphasize that establishing EDS in a linear programming setting is difficult because there is no notion of curvature (second order conditions) and thus we required a different analysis procedure than that used in \cite{na2020exponential}. The proposed diffusing-horizon formulation can be seen as a suboptimal MPC scheme; as such, we expect that existing results in suboptimal MPC \cite{pannocchia2011conditions,allan2017inherent} can be used to analyze stability and recursive feasibility, but a rigorous analysis is left as a topic of future work.}

\section{Numerical Case Study}\label{sec:cstudy}
We have applied the coarsening strategies developed in Section \ref{ssec:strategies} to design MPC controllers for a central HVAC plant. The central plant seeks to satisfy time-varying energy demands from a university campus by manipulating chillers, storage tanks, and transactions with utility companies. The problem details are provided in \cite{kumar2019stochastic}. The system under study is illustrated in Fig. \ref{fig:campus}. The problem used here is a simplified version where the peak demand cost is eliminated. 

\begin{figure*}[t]
  \centering
  \includegraphics[width=0.6\textwidth]{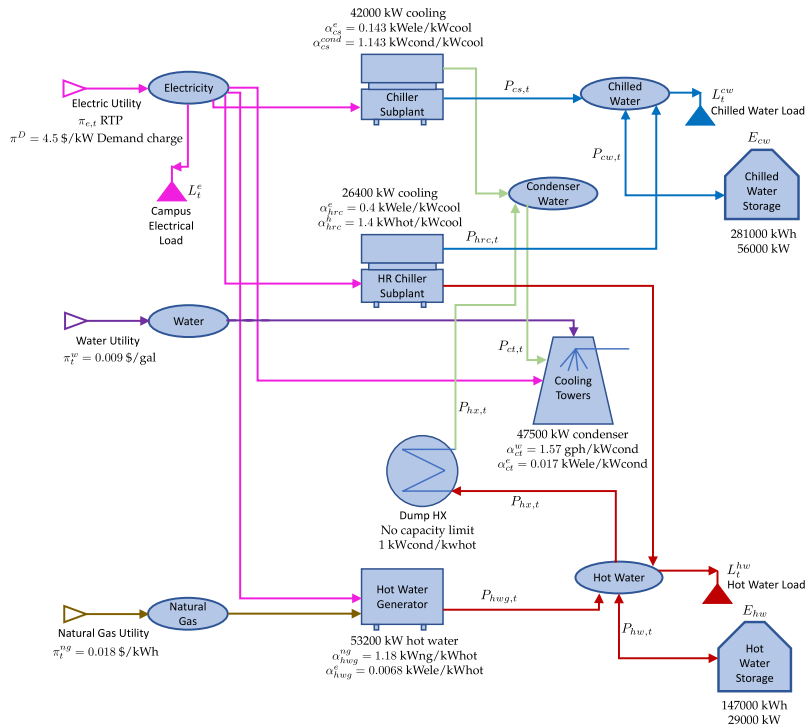}
  \caption{Schematic representation of central HVAC plant under study.}\label{fig:campus}
\end{figure*}

\subsection{Problem and Data Description}
The problem under study has the form \eqref{prob:mpc}. A description of the state $x_i$ and control $u_i$ variables and their upper and lower bounds are provided in Table \ref{tbl:variable}. Here, CW denotes chilled water and HW denotes hot water. {\color{blue} The description of the stage data $q_i$, $r_i$, $v_i$, $w_i$ is provided in Table \ref{tbl:data} (the data entries that are not mentioned are constantly zero)}. Time profiles for the problem data are shown in Fig. \ref{fig:data}. The description of the constraints (each row of \eqref{prob:mpc-con-1}-\eqref{prob:mpc-con-2}) is shown in Table \ref{tbl:constraint}. Here $A1-A2$ refer to the constraints associated with \eqref{prob:mpc-con-1} and $E1-E6$ refer to the constraints associated with \eqref{prob:mpc-con-1}. The problem statistics (including $\Delta t$, $N$, and $K$) are shown in Table \ref{tbl:stat}. Since feasibility cannot be guaranteed if $L_1\neq 1$, we use a modified coarsening scheme (as discussed in Section \ref{ssec:strategies}).  We also use zero prior solution guess $(\bz^o,\blambda^o)=0$. The problems are solved with the commercial optimization solver {\tt Gurobi} \cite{gurobi} and modeled with algebraic modeling language {\tt JuMP} \cite{dunning2017jump}. The case study is run on Intel Xeon CPU E5-2698 v3 running at 2.30GHz.

% \begin{figure*}

%  {\setcounter{MaxMatrixCols}{20}
%    \begin{align}\label{eqn:hvacmat}
%      \begin{aligned}
%        &A=
%        \begin{bmatrix}
%          1&0\\
%          0&1
%        \end{bmatrix},
%        &&B=
%        \begin{bmatrix}
%          0&0&0&0&-\Delta t&0&0&0&0&0&0&0&0&0\\
%          0&0&0&0&0&-\Delta t&0&0&0&0&0&0&0&0
%        \end{bmatrix}\\
%        &E=
%        \begin{bmatrix}
%          0&0\\
%          0&0\\
%          0&0\\
%          0&0\\
%          0&0\\
%          0&0
%        \end{bmatrix},
%        &&F=
%        \begin{bmatrix}
%          -0.143&-0.4&-0.0068&-0.0017&0&0&0&1&0&0&0&0&0&0\\
%          0&0&0&-1.57&0&0&0&0&1&0&0&0&0&0\\
%          0&0&-1.18&0&0&0&0&0&0&1&0&0&0&0\\
%          1.143&0&0&-1&0&0&1&0&0&0&0&0&0&0\\
%          1&1&0&0&1&0&0&0&0&0&1&-1&0&0\\
%          0&1.4&1&0&0&1&-1&0&0&0&0&0&1&-1
%        \end{bmatrix}
%      \end{aligned}
%  \end{align}}
%\end{figure*}

\begin{table}
  \centering
  \caption{Variable specifications}\label{tbl:variable}
  \begin{tabular}{|c|l|c|c|}
    \hline 
    Variables & Specifications \\
    \hline
    $x_i[1]$& Energy level of the CW storage [kWh] \\
    \hline
    $x_i[2]$& Energy level of the HW storage [kWh]\\
    \hline
    $u_i[1]$& Load of chiller subplant [kW] \\
    \hline
    $u_i[2]$& Load of heat recovery chiller subplant [kW] \\
    \hline
    $u_i[3]$& Load of HW generator [kW] \\
    \hline
    $u_i[4]$& Load of the cooling towers [kW] \\
    \hline
    $u_i[5]$& Charge/discharge of the CW storage [kW]\\
    \hline
    $u_i[6]$& Charge/discharge of the HW storage [kW] \\
    \hline
    $u_i[7]$& Load of the dump heat exchanger [kW] \\
    \hline
    $u_i[8]$& Electricity demand [kW] \\
    \hline
    $u_i[9]$& Water demand [gal] \\
    \hline
    $u_i[10]$& Natural gas demand [kW] \\
    \hline
    $u_i[11]$& Slack (pos.) for unmet CW load [kW] \\
    \hline
    $u_i[12]$& Slack (neg.) for unmet CW load [kW] \\
    \hline
    $u_i[13]$& Slack (pos.) for unmet HW load [kW] \\
    \hline
    $u_i[14]$& Slack (neg.) for unmet HW load [kW] \\
    \hline
  \end{tabular}
\end{table}

\begin{table}
  \centering
  \caption{Data specifications}\label{tbl:data}
  \begin{tabular}{|c|l|c|}
    \hline 
    Data & Specifications & Values\\
    \hline
    $r_i[8]$& Electricity price [\$/kWh] & Fig. \ref{fig:data}\\
    \hline
    $r_i[9]$& Price of water [\$/gal] &0.009\\
    \hline
    $r_i[10]$& Price of natural gas [\$/kWh] &0.018\\
    \hline
    $r_i[11]$& Penalty for unmet (pos.) CW load [\$/kWh] &  45\\
    \hline
    $r_i[12]$& Penalty for unmet (neg.) CW load [\$/kWh] &45\\
    \hline
    $r_i[13]$& Penalty for unmet (pos.) HW load [\$/kWh] &45\\
    \hline
    $r_i[14]$& Penalty for unmet (neg.) HW load [\$/kWh] &45\\
    \hline
    $w_i[1]$& Electrical load of campus [kWh] & Fig. \ref{fig:data} \\
    \hline
    $w_i[5]$& CW load [kWh] & Fig. \ref{fig:data}\\
    \hline
    $w_i[6]$& HW load [kWh] & Fig. \ref{fig:data}\\
    \hline
  \end{tabular}
\end{table}

\begin{figure}
  \centering
  \includegraphics[width=0.45\textwidth]{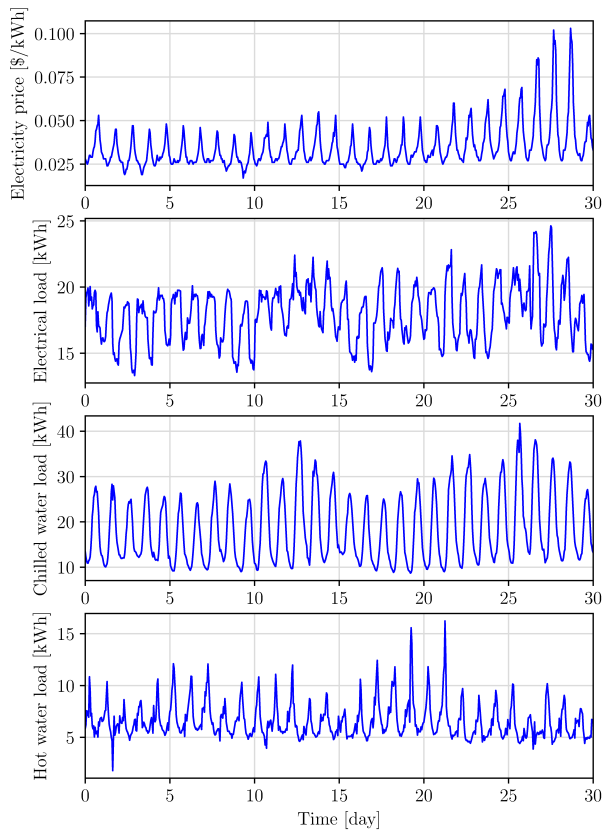}
  \caption{Time-varying data for HVAC plant problem. Electricity price $r_i[8]$, electricity load $w_i[1]$, CW load $w_i[5]$, and HW load $w_i[6]$.}  \label{fig:data}
\end{figure}

\begin{table}
  \centering
  \caption{Constraint specifications}\label{tbl:constraint}
  \begin{tabular}{|c|l|l|l|l|}
    \hline 
    Constraints & Specifications\\
    \hline
    A1& CW storage balance  \\
    \hline
    A2& HW storage balance  \\
    \hline
    E1& Electricity balance \\
    \hline
    E2& Water balance \\
    \hline
    E3& Natural gas balance \\
    \hline
    E4& Energy balance of condenser water \\
    \hline
    E5& CW load balance \\
    \hline
    E6& HW load balance \\
    \hline
  \end{tabular}
\end{table}

\begin{table*}
  \centering
    \caption{Summary of problem statistics}\label{tbl:stat}
  \begin{tabular}{|l|c|c|c|}
    \hline
    & Sensitivity Analysis
    & Solution Trajectory Analysis
    & Closed-Loop Analysis\\
    \hline 
    Time step length (full) $\Delta t$ &1 min&1 min& 1 min\\
    \hline
    Prediction horizon &1 day&  1 week& 1 week\\
    \hline
    Simulation horizon &-& -& 1 month\\
    \hline
    Number of time steps (full) $N$ &1,440& 10,080 &10,080\\
    \hline
    Number of variables in LP (full)&27,346 &  191,506&191,506\\
    \hline
    Number of equality constraints in LP (full) & 11,515& 80,635 &80,635 \\
    \hline
    Number of time steps (coarse) $K$ &-& 101 &101\\
    \hline
    Number of variables in LP (coarse)&-& 1,905&1,905\\
    \hline
    Number of equality constraints in LP (coarse) &-& 803& 803 \\
    \hline
  \end{tabular}
\end{table*}

\subsection{Sensitivity Analysis}
We first numerically verify that EDS (Theorem \ref{thm:eds}) holds. Here, we verify the decay of sensitivity by obtaining samples of perturbed solutions. First, we formulate problem $P(\bd)$ with the original data $\bd$. We refer to the solution of $P(\bd)$ as the reference solution. We then obtain samples of the {\it perturbed} problem $P(\bd+\bdelta)$ where $\bdelta$ is a random perturbation drawn from the  multivariate normal distribution $\bdelta_i[k]\sim N(0,\sigma_k^2)$. In particular, we use $\sigma_k=10$ for $k\in\{3,7,8\}$ and $\sigma_k=0$ otherwise (i.e., we only perturb electrical loads, CW loads, and HW loads). The solutions of the perturbed problems are called perturbed solutions ({\color{blue} the random perturbations are introduced to the forecast data $\{d_i\}_{i=1}^N$ within Problem \eqref{prob:mpc}}). To assess the decay of sensitivity, we apply the perturbation only in specific locations in the time horizon. In particular, for each case study (i)-(iv), we applied perturbations $d_j\leftarrow d_j+\delta_j$ only at time locations (i) $0\leq j\leq 99$, (ii) $100\leq j\leq 199$, (iii) $200\leq j\leq 299$, and (iv) $300\leq j\leq 399$ to obtain the samples of perturbed solutions $\bz^*(\bd+\bdelta)$. For each case, $1000$ perturbation samples are obtained. Parts of the samples $(u_1[5],u_1[6])$ are shown in Fig. \ref{fig:sens}. Random noise from $N(0,\diag(500,500))$ is added to the samples of $(u_1[5],u_1[6])$ to enhance the visibility of overlapped points (the variances are appropriately set so that the random noise does not distort the overall shape of the distribution).

\begin{figure}[t]
  \centering
  \includegraphics[width=.45\textwidth]{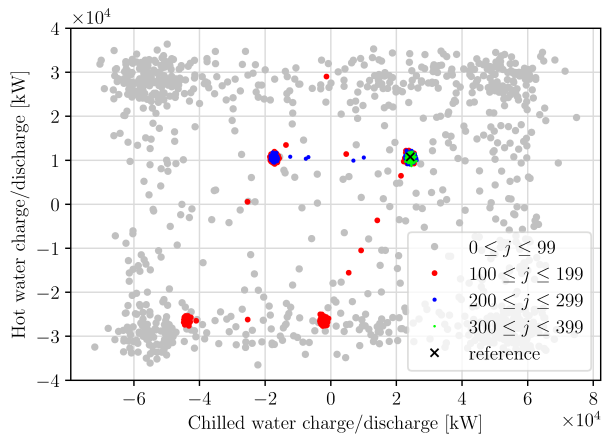}
  \caption{Effect of perturbations at different time locations on controls $(u_1[5],u_1[6])$. When the perturbation location $j$ is small the effect of the perturbation is strong and the effect decays as $j$ becomes larger.}\label{fig:sens}
\end{figure}

From the results, we can see that the sensitivity of the effect of perturbation indeed decays with respect to the distance from the perturbation. Specifically, one can see that the samples obtained from perturbations on time indexes $0\leq j\leq 99$ (case (i)) cover a large region around the reference solution (the solution is highly sensitive). On the other hand, the region of the samples obtained from perturbations on $100\leq j\leq 199$ is much smaller (case (ii)) and this effect can also be observed in cases (iii)-(iv) (solutions are less sensitive).

\subsection{Solution Trajectory Analysis}
In Fig. \ref{fig:sol}, we show the solution trajectories obtained with the coarse problems for the three different coarsening schemes discussed in Section \ref{ssec:strategies} along with the solution trajectory obtained with the full-resolution scheme. Here, we only show the trajectories of $z'_i[1]$ (the CW storage). One can see that the coarse trajectories capture the overall behavior of the full-resolution solution. In particular, the equal-spacing scheme most closely resembles the full-resolution trajectory. However, as we discussed before, the solution quality of the far future locations does not have a big impact on the implemented control action. Thus, this does not necessarily mean that the equal-spacing scheme has the best closed-loop performance. Similarly, even if the full-then-sparse scheme seemingly has bad performance, the closed-loop performance may actually be better. We can also see how the solution of the diffusing horizon scheme becomes increasingly coarse as time proceeds.

\subsection{Closed-Loop Analysis}\label{ssec:perf}
We evaluated the closed-loop economic performance of the MPC controller for three different coarsening strategies. The performance of the full-resolution problem (without coarsening) was also assessed and used as a reference. The closed-loop economic performance is calculated as $\sum_{i=1}^{N_{\text{sim}}} p_i^\top z'_i$.
It is important to note that the actions $z'_i$ are obtained from the solution of MPC problems solved at different sampling times. A comparison of the closed-loop costs is shown in Fig. \ref{fig:costs} as cumulative frequency plots with $10$ different scenarios. Each scenario uses a different data sequence (drawn from historical data).  We observe that, among the three coarsening schemes, the diffusing-horizon scheme always performed best. In particular, the performance of the diffusing-horizon scheme was close to the performance of the full-resolution controller. On average, we observed a $3\%$ increase in the closed-loop cost while the equal-spacing and full-then-coarse schemes experienced cost increases of $350\%$ and $130\%$, respectively.  The computational times for the MPC problems (for the entire closed-loop simulation) are compared in Fig. \ref{fig:times}. We observe that the three coarsening strategies significantly reduce times (by two orders of magnitude). Specifically, total solution times were reduced from hours to two minutes. We can thus conclude that the diffusing-horizon coarsening scheme can effectively reduce computational complexity while maintaining solution quality. 

\begin{figure}
  \centering
  \includegraphics[width=.45\textwidth]{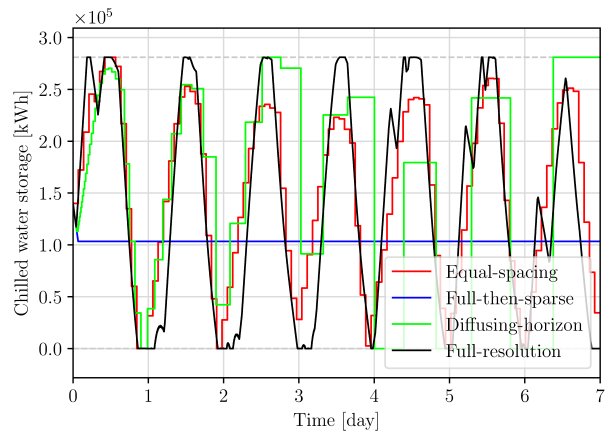}
  \caption{Solution trajectories for different coarsening schemes.}\label{fig:sol}
\end{figure}

\begin{figure}
  \centering
  \includegraphics[width=.45\textwidth]{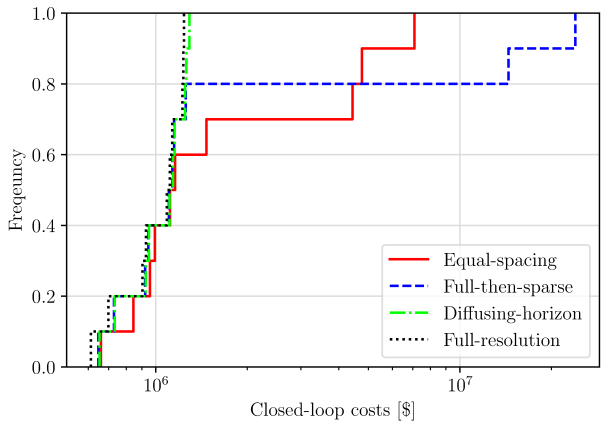}
  \caption{Closed-loop economic performance for different coarsening schemes.}\label{fig:costs}
\end{figure}

\begin{figure}
  \centering
  \includegraphics[width=.45\textwidth]{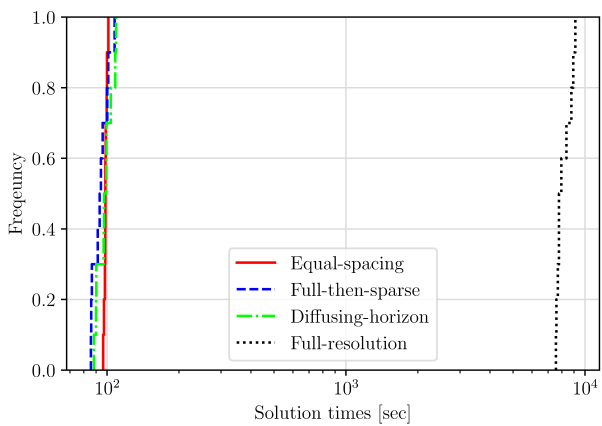}
  \caption{Total solution times for different coarsening schemes.}\label{fig:times}
\end{figure}

\section{Conclusions}
We have presented an algebraic coarsening strategy that we call diffusing-horizon MPC. The strategy is motivated by a fundamental solution property of MPC problems that indicates that the effect of data perturbations at a future time decays exponentially as one moves backward in time. Using the MPC case study for the central HVAC plant, we have demonstrated that the suggested strategy can effectively reduce the computational complexity of the problem while maintaining high economic performance. We highlight that the application of the diffusing-horizon strategy is not confined to time-domain coarsening but it can also be applied to general graph structures. A particularly interesting application of diffusing horizons would be to conduct scenario tree aggregation for multi-stage stochastic MPC problems, whose complexity grows much faster than deterministic MPC problems. These are interesting directions of future work.

\section{Acknowledgements}

We acknowledge partial funding from the National Science Foundation under award NSF-EECS-1609183.

\bibliographystyle{IEEEtran}
\bibliography{diffusing_horizon_mpc}

\vspace{-0.2in}
\begin{IEEEbiographynophoto}{Sungho Shin}  received the B.S. degree in chemical engineering and mathematics from Seoul National University, Seoul, South Korea, in 2016. He is currently working toward the Ph.D. degree with the Department of Chemical and Biological Engineering, University of Wisconsin-Madison, Madison, WI, USA. His research interests include control theory and optimization algorithms for complex networks.
\end{IEEEbiographynophoto}
\vspace{-0.2in}
\begin{IEEEbiographynophoto}{Victor M. Zavala}
  is the Baldovin-DaPra Associate Professor in the Department of Chemical and Biological Engineering at the University of Wisconsin-Madison. He holds a B.Sc. degree from Universidad Iberoamericana and a Ph.D. degree from Carnegie Mellon University, both in chemical engineering. He is an associate editor for the Journal of Process Control and a technical editor of Mathematical Programming Computation. His research interests are in the areas of energy systems, high-performance computing, stochastic programming, and predictive control.
\end{IEEEbiographynophoto}
\end{document}